\documentclass[preprint,11pt]{elsarticle}

\usepackage{amsmath,amssymb,amsthm}
\usepackage{graphicx}

\theoremstyle{plain}
\newtheorem{theorem}{Theorem}[section]
\newtheorem{proposition}[theorem]{Proposition}
\newtheorem{lemma}[theorem]{Lemma}
\newtheorem{corollary}[theorem]{Corollary}

\theoremstyle{definition}
\newtheorem{definition}[theorem]{Definition}

\newtheorem{remark}[theorem]{Remark}
\newtheorem{question}[theorem]{Question}

\newcommand{\K}{\mathcal{Q}}
\newcommand{\OO}{\mathcal{O}}

\newcommand{\N}{\mathbb{N}}
\newcommand{\PS}{P_S}
\newcommand{\UP}[2]{\uparrow_{#1}^{#2}}

\newcommand{\two}{\mathbf{2}}

\begin{document}

\begin{frontmatter}

\title{A note on power spaces and function spaces of strongly well-filtered spaces}

\author[1]{Zhenzhu Yuan}
\address[1]{School of Mathematical Sciences, Changsha Normal University, Changsha, Hunan, 410100, China}
\ead{yuanzhenzhuyzz@163.com}

\author[2]{Xiangrui Li}
\address[2]{School of Mathematics, Hunan University, Changsha, Hunan, 410082, China}
\ead{17731912783@163.com}

\author[3]{Bei Liu\corref{cor1}}
\address[3]{School of Mathematical Sciences, Hebei Normal University, Shijiazhuang  050024, China}
\cortext[cor1]{~Corresponding author.}
\ead{liubei@hebtu.edu.cn}

\author[2]{Qingguo Li}
\ead{liqingguoli@aliyun.com}
\begin{abstract}
	
 Strong well-filteredness is a refinement of well-filteredness arising
 naturally in non-Hausdorff topology and domain theory. In this paper,
 we construct a countable locally compact strong $R$-space $X$
 such that neither its Smyth power space nor its Scott power space
 is a strong $d$-space. This example gives negative answers to the
 questions concerning the preservation of strong well-filteredness
 and the strong $R$-space property under power space constructions.
 Furthermore, for the two-point discrete space $\two$, the function
 space $(\two,X)$ is not a strong $d$-space under any of the
 pointwise convergence, compact-open, Isbell, core-open
 or Scott topologies, where the Scott topology is induced by the pointwise order. 
\end{abstract}

\begin{keyword}
strongly well-filtered space, strong $d$-space, Smyth power space,  Isbell topology

\*MSC(2020): 54D99; 54B20; 06B35; 06F30;
\end{keyword}

\end{frontmatter}

\section{Introduction}\label{sec:intro}

Well-filtered spaces form an important class in non-Hausdorff topology and domain theory. It has been studied extensively in connection with posets and sober spaces \cite{Gierz,Goubault,LawsonXu2024,XiLawson2017,XuZhao2021}.

Strong $d$-spaces were introduced by Xu and Zhao \cite{XuZhao2020} as a
strengthening of the nation of $d$-spaces. To further investigate the relationship between well-filtered spaces and strong $d$-spaces, Xu \cite{Xu2026} introduced strongly well-filtered spaces.
Strong well-filteredness strengthens the defining condition of
well-filteredness by intersecting the filtered family with an additional
compact saturated set. Every strongly well-filtered space is both a
well-filtered space and a strong $d$-space. Every coherent well-filtered space is strongly well-filtered
\cite{Xu2026}. It is therefore natural to investigate whether this stronger property is preserved under the formation of power spaces and function spaces.

Power spaces provide one setting for this question. For a $T_0$-space $X$, let $\K(X)$ denote the family of all nonempty compact saturated subsets of $X$. Endowing $\K(X)$ with the upper Vietoris topology yields the Smyth power space $\PS(X)$. Equipping $\K(X)$ with the Scott topology induced by the reverse inclusion order gives the Scott power space $P_\sigma(X)=\Sigma\K(X)$. Both constructions play an important role in powerdomain theory and in the study of non-Hausdorff spaces \cite{Gierz,HeckmannKeimel}. The two topologies need not coincide in
general, but they do coincide when $X$ is locally compact and well-filtered
\cite{XuYang2021}. Xu, Wen and Xi \cite{XuWenXi2023} proved that the Scott
power space of a well-filtered space is again well-filtered. This result
motivates the corresponding preservation problem for strong
well-filteredness.

Xu posed the following question for Smyth power spaces.

\begin{question}[Question 7.3 of \cite{Xu2026}]\label{q1}
	For a strongly well-filtered space $X$, is $\PS(X)$ a strong $d$-space?
	Furthermore, is $\PS(X)$ strongly well-filtered?
\end{question}

We also consider the corresponding question for Scott power spaces.

\begin{question}[Question 7.7 of \cite{Xu2026}]\label{q2}
	For a strongly well-filtered space $X$, is its Scott power space
	$P_\sigma(X)$ strongly well-filtered?
\end{question}

A related class is that of strong $R$-spaces, introduced by Xu, Yang and
Chen \cite{XuYangChen2026} and studied further by Wen, Bao and Xu
\cite{WenBaoXu2026}. The strong $R$-space property requires that, whenever
the intersection of a nonempty family of compact saturated sets is
contained in an open set, the intersection of some nonempty finite
subfamily is already contained in that open set. Every strong $R$-space
is both an $R$-space and a strongly well-filtered space, and every coherent
well-filtered space is a strong $R$-space \cite{WenBaoXu2026}.
We also recall that an $R$-space is a strong $d$-space: applying property
$R$ to the family
$\{\mathord{\uparrow}d:d\in D\}\cup\{\mathord{\uparrow}x\}$ and using the
directedness of $D$ gives the defining implication for a strong $d$-space.

The power space questions for strong $R$-spaces are closely related to
those above. Wen, Bao and Xu \cite[Proposition 6.1]{WenBaoXu2026} proved
that if $\PS(X)$ is an $R$-space, then $X$ must be a strong $R$-space.
They also proved that $P_\sigma(X)$ is a coherent well-filtered space,
and hence, it is a strong $R$-space whenever $X$ is coherent and well-filtered
\cite[Proposition 6.6]{WenBaoXu2026}. They asked the following questions.

\begin{question}[Question 7.1 of \cite{WenBaoXu2026}]\label{q3}
	For a strong $R$-space $X$, is $\PS(X)$ an $R$-space?
	Furthermore, is $\PS(X)$ a strong $R$-space?
\end{question}

\begin{question}[Question 7.2 of \cite{WenBaoXu2026}]\label{q4}
	For a strong $R$-space $X$, is $P_\sigma(X)$ a strong $R$-space?
\end{question}

Function spaces give a second setting for preservation questions.
Topologies on sets of continuous maps, including the compact-open,
Isbell and Scott topologies, have been studied in connection with
sobriety, $d$-spaces and other order-theoretic properties
\cite{Isbell,Ershov2020,Liu2021,Ershov2022,Ershov2024}.
In particular, Xu, Bao and Zhang \cite{xubao2022} proved that the function
space of continuous maps into a well-filtered space is well-filtered
when equipped with the Isbell topology. This leads us to ask whether
strong well-filteredness or the strong $R$-space property is preserved
by the same construction.

In this paper, we construct a countable locally compact strong $R$-space
$X$ such that neither $\PS(X)$ nor $P_\sigma(X)$ is a strong $d$-space.
Thus both power spaces fail to be $R$-spaces, strongly well-filtered spaces
and strong $R$-spaces, giving negative answers to
Questions~\ref{q1}--\ref{q4}. The construction uses the two Scott spaces
$A$ and $B$ from the product counterexample discussed in
\cite[Example 5.5]{Xu2026} and \cite[Example 5.5]{WenBaoXu2026}.
We prove that arbitrary topological sums of strong $R$-spaces are strong
$R$-spaces and take $X=A\sqcup B$. A closed subspace of $\PS(X)$ is
homeomorphic to $\PS(A)\times\PS(B)$, where we construct an explicit
failure of the strong $d$-space condition. Local compactness and
well-filteredness then allow us to transfer the result to $P_\sigma(X)$.

For function spaces, we use the two-point discrete space $\two$.
The Isbell function space $[\two,X]_{\mathcal I}$ is homeomorphic to
$X\times X$, which is not a strong $d$-space. We further show that the
same conclusion holds for the pointwise convergence, compact-open,
core-open and Scott topologies, where the Scott topology is defined
using the pointwise order. Thus the failure of preservation already
occurs for a finite discrete domain and a countable locally compact
strong $R$-space as a codomain.

\section{Preliminaries}\label{sec:prelim}

We recall the notions needed in the sequel; the reader is referred to
\cite{Gierz, Goubault, Xu2026} for more details.

For a poset $P$ and $A\subseteq P$, we write
$\mathord{\uparrow}A=\{x\in P:\exists a\in A,\ a\leq x\}$ and
$\mathord{\downarrow}A=\{x\in P:\exists a\in A,\ x\leq a\}$; for $x\in P$, we
write $\mathord{\uparrow}x$ and $\mathord{\downarrow}x$ for the corresponding
principal sets. A subset $A$ is an \emph{upper set} (resp.\ \emph{lower set})
if $A=\mathord{\uparrow}A$ (resp.\ $A=\mathord{\downarrow}A$). A nonempty
subset $D\subseteq P$ is \emph{directed} if any two elements of $D$ have an
upper bound in $D$. A poset in which every directed subset has a supremum is a
\emph{dcpo}. A poset is \emph{Noetherian} if every directed subset has a
largest element.

A subset $U$ of a dcpo $P$ is \emph{Scott open} if $U=\mathord{\uparrow}U$ and,
for every directed subset $D\subseteq P$ with $\bigvee D$ existing, $\bigvee
D\in U$ implies $D\cap U\neq\emptyset$. The Scott open sets form the
\emph{Scott topology} $\sigma(P)$, and $\Sigma P=(P,\sigma(P))$ denotes the
Scott space of $P$. The \emph{Alexandroff topology} $\alpha(P)$ consists of
all upper sets of $P$. If every directed subset of $P$ has a largest element,
then every upper set is Scott open, whence $\alpha(P)=\sigma(P)$.

Throughout this paper, $X$ denotes a $T_0$-space. The \emph{specialization order}
$\leq_X$ on $X$ is defined by $x\leq_X y$ iff $x\in\overline{\{y\}}$. A subset
$A$ of $X$ is \emph{saturated} if it is an upper set with respect to
$\leq_X$, i.e.,\ if it equals the intersection of all open sets containing it.
For $x\in X$, the saturation $\mathord{\uparrow}x$ of $\{x\}$ is an upper set.
We denote by $\K(X)$ the family of all \emph{nonempty} compact saturated
subsets of $X$.


The set of all natural numbers is denoted by $\mathbb{N}$, the order on $\mathbb{N}$ is the usual order of natural numbers. Let $\mathbb{N}^+=\mathbb{N}\setminus\{0\}$.

A family $\{K_d:d\in D\}\subseteq\K(X)$ indexed by a directed set $D$ is
called \emph{filtered} if for all $d_1,d_2\in D$ there exists $d_3\in D$ with
$K_{d_3}\subseteq K_{d_1}\cap K_{d_2}$. A $T_0$-space $X$ is \emph{well-filtered} if for every
filtered family $\{K_d:d\in D\}\subseteq\K(X)$ and every open set $U$ of $X$,
\[
\bigcap_{d\in D}K_d\subseteq U
\]
implies $K_d\subseteq U$ for some $d\in D$. A $T_0$-space $X$ is \emph{sober}
if every irreducible closed set is the closure of a unique point. Every sober
space is well-filtered \cite{Gierz}.

\begin{definition}\cite[Definition 3.18]{XuZhao2020}\label{df1}
A $T_0$-space $X$ is a \emph{strong
$d$-space} if for every directed subset $D\subseteq X$ (with respect to
$\leq_X$), every $x\in X$ and every open set $U$ of $X$,
$
\bigcap_{d\in D}\mathord{\uparrow}d\ \cap\ \mathord{\uparrow}x\subseteq U
$
implies $\mathord{\uparrow}d\cap\mathord{\uparrow}x\subseteq U$ for some
$d\in D$.
\end{definition}

\begin{definition}\cite[Definition 4.1]{Xu2026}
 A $T_0$-space $X$ is
\emph{strongly well-filtered} if for every filtered family
$\{K_d:d\in D\}\subseteq\K(X)$, every $K\in\K(X)$ and every open set $U$ of
$X$,
$
\bigcap_{d\in D}K_d \cap K\subseteq U
$
implies $K_d\cap K\subseteq U$ for some $d\in D$.
\end{definition}

For an open set $U$ of $X$, we write
$\square U=\{K\in\K(X):K\subseteq U\}.$ The topology on $\K(X)$ generated by $\{\square U:U\in\OO(X)\}$ is the
\emph{upper Vietoris topology}, and the resulting space is the \emph{Smyth
power space} $\PS(X)$ of $X$ (see \cite{HeckmannKeimel, Xu2026}). Its
specialization order is the reverse inclusion order:
\[
K_1\leq_{\PS(X)}K_2\iff K_2\subseteq K_1.
\]
Consequently, for $K\in\K(X)$ the principal upper set of $K$ in $\PS(X)$ is
\[
\UP{\PS(X)}{}K=\{L\in\K(X):L\subseteq K\},
\]
i.e.,\ the family of all nonempty compact saturated subsets of $K$ that belong
to $\K(X)$. When $\K(X)$ is ordered by reverse inclusion, the space
$P_\sigma(X)=\Sigma\K(X)$ is called the \emph{Scott power space} of $X$.

Write $\mathrm{Fin}\,X=\{\mathord{\uparrow}F:\emptyset\neq F\subseteq X,\ F
\text{ finite}\}$.

\begin{definition}\label{def:Rspace}
\begin{enumerate}
\item[(1)] (\cite[Definition 10.2.11]{Xu2016}) A
$T_0$-space $X$ is said to have \emph{property $R$} if for any nonempty family
$\{\mathord{\uparrow}F_i:i\in I\}\subseteq\mathrm{Fin}\,X$ and any open set
$U$ of $X$,
\[
\bigcap_{i\in I}\mathord{\uparrow}F_i\subseteq U
\]
implies $\bigcap_{i\in J}\mathord{\uparrow}F_i\subseteq U$ for some nonempty
finite $J\subseteq I$. A $T_0$-space satisfying property $R$ is called an
\emph{$R$-space}.
\item[(2)] (\cite[Definition 4.2]{XuYangChen2026}) A
$T_0$-space $X$ is called a \emph{strong $R$-space} if for any nonempty family
$\{K_i:i\in I\}\subseteq\K(X)$ and any open set $U$ of $X$,
\[
\bigcap_{i\in I}K_i\subseteq U
\]
implies $\bigcap_{i\in J}K_i\subseteq U$ for some nonempty finite
$J\subseteq I$.
\end{enumerate}
\end{definition}

\begin{remark}
	It was shown in \cite{WenBaoXu2026} that every strong $R$-space is strongly well-filtered, and every strongly well-filtered space is a strong $d$-space.
\end{remark}

We shall use the following facts from \cite{Xu2026}.

\begin{theorem}[\cite{Gierz} Theorem II-4.13, \cite{Xu2026} Lemma 1.9]\label{tmscott}
Let \(P\) be a poset. Then the following statements are equivalent:
\begin{enumerate}
    \item[(1)] \(\Sigma P\) is core-compact.
    \item[(2)] For every poset \(S\), one has \(\Sigma(P \times S) = \Sigma P \times \Sigma S\), that is, the Scott topology of \(P \times S\) is equal to the product of the individual Scott topologies.
    \item[(3)] For every dcpo or complete lattice \(S\), one has \(\Sigma(P \times S) = \Sigma P \times \Sigma S\).
    \item[(4)] \(\Sigma(P \times \sigma(P)) = \Sigma P \times \Sigma \sigma(P)\).
\end{enumerate}
\end{theorem}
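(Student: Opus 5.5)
The plan is to prove the cycle $(1)\Rightarrow(2)\Rightarrow(3)\Rightarrow(4)\Rightarrow(1)$. Two of these implications are trivial. $(2)\Rightarrow(3)$ is a specialization. $(3)\Rightarrow(4)$ holds because $\sigma(P)$, ordered by inclusion, is a complete lattice. For the two substantive directions I would recall the standard facts used throughout. The Scott topology on $P\times S$ always contains the product topology $\sigma(P)\times\sigma(S)$. A set is Scott open in $P\times S$ iff it is an upper set that is Scott open in each variable separately. Core-compactness of $\Sigma P$ means that $\sigma(P)$ is a continuous lattice, i.e.\ every Scott open $U$ satisfies $U=\bigcup\{V\in\sigma(P): V\ll U\}$.

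For $(1)\Rightarrow(2)$, take $W\in\sigma(P\times S)$ and $(p,s)\in W$. The goal is an open rectangle $V\times T$ with $(p,s)\in V\times T\subseteq W$. Define $U_s=\{x\in P:(x,s)\in W\}$, which is Scott open in $P$ and contains $p$. By core-compactness, choose $V\in\sigma(P)$ with $p\in V\ll U_s$. Then set
\[
T=\{t\in S: V\ll W^t\}, \qquad W^t=\{x\in P:(x,t)\in W\}.
\]
We have $s\in T$, and $T$ is an upper set because $t\le t'$ implies $W^t\subseteq W^{t'}$. For Scott openness of $T$, let $D\subseteq S$ be directed with $\bigvee D\in T$. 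Separate Scott continuity of $W$ gives $W^{\bigvee D}=\bigcup_{d\in D}W^d$, a directed union. We need some $d$ with $V\ll W^d$. To get this, interpolate $V\ll V'\ll W^{\bigvee D}$ using continuity of $\sigma(P)$, and redefine $T$ via $V'$ so that the interpolation step works. The way-below relation then yields $V'\subseteq W^d$ for some $d$, hence $V\ll W^d$. Finally, $V\times T\subseteq W$ is immediate.

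For $(4)\Rightarrow(1)$, I would use the membership relation $E=\{(x,U)\in P\times\sigma(P): x\in U\}$. It is an upper set, and it is Scott open in each variable: a directed union of open sets contains $x$ iff some member does. Hence $E\in\sigma(P\times\sigma(P))$, and by (4) it is open in the product topology. Given $x\in U\in\sigma(P)$, we then find $V\ni x$ Scott open and a Scott open $\mathcal{H}\subseteq\sigma(P)$ containing $U$ with $V\times\mathcal{H}\subseteq E$, so that $V\subseteq\bigcap\mathcal{H}$. Since $\mathcal{H}$ is Scott open in $\sigma(P)$, any open cover of $U$ has a finite subunion lying in $\mathcal{H}$, and that subunion contains $V$. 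So $V\ll U$, and $\Sigma P$ is core-compact.

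The main obstacle is the Scott-openness of $T$ in $(1)\Rightarrow(2)$, which requires the interpolation property of $\ll$ in the continuous lattice $\sigma(P)$. This step needs care when $P$ is only a poset rather than a dcpo, where directed sups may fail to exist. The way-below argument itself only involves unions of opens, so it is unaffected, and the separate-continuity characterization should be checked for directed sets whose suprema exist.
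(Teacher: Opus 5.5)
Your proof is correct. The paper itself gives no proof of this statement; it only cites Gierz et al.\ (Theorem II-4.13) and Xu (Lemma 1.9). Your argument is essentially the standard one from those sources: for $(1)\Rightarrow(2)$, the set $T=\{t: V\ll W^t\}$, made Scott open by interpolation in the continuous lattice $\sigma(P)$; for $(4)\Rightarrow(1)$, the membership relation $\{(x,U):x\in U\}$.

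Two small clean-ups. First, drop the phrase about redefining $T$ via $V'$. The interpolant $V\ll V'\ll W^{\bigvee D}$ depends on $D$, so $T$ must stay as originally defined, and your final sentence already shows that this works. Second, the poset-level facts you flagged do hold: a directed $D\subseteq P\times S$ has a supremum iff both projections do, in which case $\bigvee D=(\bigvee\pi_1 D,\bigvee\pi_2 D)$. This gives both the inclusion of the product topology and the separate Scott-openness characterization of $\sigma(P\times S)$.
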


The following results are well-known (cf. \cite{XuYang2021},  \cite{Xu2026}, \cite{WenBaoXu2026}).

\begin{proposition}\label{prop:retractive}
A retract of a strong $d$-space (strongly well-filtered space, strong $R$-space, respectively) is a strong $d$-space (strongly well-filtered space, strong $R$-space, respectively).
\end{proposition}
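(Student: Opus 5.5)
Let $Y$ be one of the three kinds of space, and let $r:Y\to Z$ be a continuous retraction with continuous section $s:Z\to Y$, so that $r\circ s=\mathrm{id}_Z$. We show that $Z$ inherits the property. The plan rests on two standard facts. First, continuous maps preserve the specialization order, compactness, and hence directedness. Second, for $K\in\K(Z)$ the set $\mathord{\uparrow}s(K)$ lies in $\K(Y)$, since the saturation of a compact set is compact. For an open $U\subseteq Z$ we pull back to the open set $r^{-1}(U)\subseteq Y$, use the property in $Y$, and push the conclusion back along $r$.

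\textbf{Strong $R$-spaces.} Let $\{K_i:i\in I\}\subseteq\K(Z)$ be nonempty with $\bigcap_i K_i\subseteq U$. Put $L_i=\mathord{\uparrow}s(K_i)\in\K(Y)$. We first claim $\bigcap_i L_i\subseteq r^{-1}(U)$. Take $y\in\bigcap_i L_i$. For each $i$ there is $k_i\in K_i$ with $s(k_i)\leq y$. Then $k_i=r(s(k_i))\leq r(y)$, and since $K_i$ is saturated, $r(y)\in K_i$. Hence $r(y)\in\bigcap_i K_i\subseteq U$. The strong $R$ property of $Y$ now gives a nonempty finite $J\subseteq I$ with $\bigcap_{i\in J}L_i\subseteq r^{-1}(U)$. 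To finish, take $z\in\bigcap_{i\in J}K_i$. Then $s(z)\in\bigcap_{i\in J}L_i$, so $z=r(s(z))\in U$.

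\textbf{Strongly well-filtered spaces.} The same argument applies. If $\{K_d\}$ is filtered, then $\{\mathord{\uparrow}s(K_d)\}$ is filtered as well, because $K_{d_3}\subseteq K_{d_1}\cap K_{d_2}$ implies $\mathord{\uparrow}s(K_{d_3})\subseteq \mathord{\uparrow}s(K_{d_1})\cap\mathord{\uparrow}s(K_{d_2})$. We also include the extra set $L=\mathord{\uparrow}s(K)$. The inclusion $\bigcap_d L_d\cap L\subseteq r^{-1}(U)$ follows from the saturation argument above. Strong well-filteredness of $Y$ then yields some $d$ with $L_d\cap L\subseteq r^{-1}(U)$. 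Finally, $z\in K_d\cap K$ gives $s(z)\in L_d\cap L$, so $z\in U$.

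\textbf{Strong $d$-spaces.} Let $D\subseteq Z$ be directed and $x\in Z$. Then $s(D)$ is directed in $Y$, and we take the point $s(x)$. We have $\mathord{\uparrow}s(d)=\mathord{\uparrow}s(\mathord{\uparrow}d)$, and the saturation argument applies verbatim to principal filters, giving $\bigcap_d\mathord{\uparrow}s(d)\cap\mathord{\uparrow}s(x)\subseteq r^{-1}(U)$. The strong $d$ property of $Y$ provides $d\in D$ with $\mathord{\uparrow}s(d)\cap\mathord{\uparrow}s(x)\subseteq r^{-1}(U)$. Applying $s$ followed by $r$ then gives $\mathord{\uparrow}d\cap\mathord{\uparrow}x\subseteq U$.

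The only step requiring care is the key inclusion: a point $y$ of $\bigcap L_i$ need not lie in the image of $s$. This is resolved by passing to $r(y)$ and using that each $K_i$ is saturated together with monotonicity of $r$. Everything else is routine.
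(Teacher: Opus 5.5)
Your proof is correct. The paper gives no proof of its own and simply cites the result as well known; your route is the standard argument for such retract results: pull $U$ back to $r^{-1}(U)$, push compact saturated sets forward as $\mathord{\uparrow}s(K)$, and get the key inclusion from monotonicity of $r$ together with saturation of the $K_i$. The only point left implicit is that $Z$ is $T_0$, as all three definitions require. This holds because $s$ is a homeomorphism onto $s(Z)$, with inverse $r|_{s(Z)}$.
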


\begin{lemma}\label{lem:coincide}
If $X$ is a locally compact well-filtered space, then the upper Vietoris
topology and the Scott topology on $\K(X)$ coincide; hence,
$\PS(X)=P_\sigma(X)$.
\end{lemma}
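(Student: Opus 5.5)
We prove Lemma~\ref{lem:coincide} by comparing the two topologies on $\K(X)$ directly. Both are taken with respect to the reverse inclusion order, and this order is the specialization order of $\PS(X)$.

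\emph{Upper Vietoris $\subseteq$ Scott.} We only need to check that each basic open set $\square U$ is Scott open. It is clearly an upper set for reverse inclusion. Let $\{K_d:d\in D\}$ be directed for reverse inclusion, that is, a filtered family. In a well-filtered space the intersection $\bigcap_d K_d$ is compact saturated, and it is nonempty. Indeed, if it were empty, then $\bigcap_d K_d\subseteq\emptyset$, and well-filteredness would give $K_d=\emptyset$ for some $d$, which is impossible. So $\bigcap_d K_d$ is the supremum of the family in $\K(X)$. If this supremum lies in $\square U$, then $\bigcap_d K_d\subseteq U$, and well-filteredness gives $K_d\subseteq U$ for some $d$. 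Hence $\square U$ is Scott open. This direction uses only well-filteredness.

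\emph{Scott $\subseteq$ upper Vietoris.} Let $\mathcal U$ be Scott open and $K\in\mathcal U$. We must find an open $U$ with $K\in\square U\subseteq\mathcal U$. Local compactness of the well-filtered space $X$ gives the following interpolation fact: for $K\subseteq W$ with $W$ open, there are an open $V$ and a compact saturated $Q$ with $K\subseteq V\subseteq Q\subseteq W$. One covers $K$ by finitely many interiors of compact neighbourhoods inside $W$ and takes finite unions.

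Now consider the family
\[
\mathcal F=\{Q\in\K(X): K\subseteq \mathrm{int}\,Q\}.
\]
This family is filtered. Given $Q_1,Q_2\in\mathcal F$, apply interpolation to $K\subseteq\mathrm{int}\,Q_1\cap\mathrm{int}\,Q_2$ to obtain a member of $\mathcal F$ contained in $Q_1\cap Q_2$.

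Next we claim $\bigcap\mathcal F=K$. Since $K$ is saturated, it is the intersection of its open neighbourhoods $W$. For each such $W$, interpolation gives some $Q\in\mathcal F$ with $Q\subseteq W$. Hence $\bigcap\mathcal F\subseteq K$, and the reverse inclusion is immediate.

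So $K$ is the directed supremum of $\mathcal F$ in $(\K(X),\supseteq)$. Since $\mathcal U$ is Scott open, some $Q\in\mathcal F$ lies in $\mathcal U$. Put $U=\mathrm{int}\,Q$. Then $K\in\square U$. Moreover, any $L\in\square U$ satisfies $L\subseteq Q$, so $L$ lies above $Q$ in the order. As $\mathcal U$ is an upper set, $L\in\mathcal U$. Thus $\square U\subseteq\mathcal U$.

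The main point needing care is the interpolation step, together with the verification that directed suprema in $\K(X)$ are intersections. The second fact is exactly where well-filteredness, and in particular nonemptiness of the intersection, enters. Once these are in place, both inclusions follow formally, and so $\PS(X)=P_\sigma(X)$.
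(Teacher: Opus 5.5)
Your proof is correct. The paper does not prove this lemma itself: it lists it among well-known facts and cites Xu--Yang \cite{XuYang2021} and \cite{Xu2026,WenBaoXu2026}, so there is no in-paper argument to compare with. What you give is a standard, self-contained verification.

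Your argument is in fact slightly sharper than the statement. The inclusion of the upper Vietoris topology in $\sigma(\K(X))$ uses only well-filteredness. The reverse inclusion uses only local compactness: in the second half, the supremum of $\mathcal F$ is $K$ itself, which already lies in $\K(X)$, so well-filteredness is never invoked there. Your phrase ``local compactness of the well-filtered space'' therefore overstates what the interpolation step needs. This direct route also avoids the other common argument, which passes through two facts: locally compact well-filtered spaces are sober, and for such spaces $\K(X)$ is a continuous dcpo.

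Three small points should be written out:
\begin{enumerate}
\item[(1)] A filtered intersection in a well-filtered space is compact. Given an open cover of $\bigcap_d K_d$, apply well-filteredness to its union to get a single $K_d$ inside it, then use compactness of that $K_d$.
\item[(2)] $\mathcal F\neq\emptyset$, which directedness requires. Interpolate with $W=X$.
\item[(3)] In the interpolation step, take $Q$ to be the saturation of the finite union of compact neighbourhoods. The saturation of a compact set is compact, and it stays inside $W$ because open sets are upper sets.
\end{enumerate}
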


\begin{proposition}\label{lem:sd-closed}
Let \(X\) be a strong $d$-space (strongly well-filtered space, strong $R$-space, respectively) and \(A\) a nonempty closed set of \(X\). Then \(A\) as a subspace of \(X\) is a strong $d$-space (strongly well-filtered space, strong $R$-space, respectively).
\end{proposition}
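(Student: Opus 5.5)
The plan is to use two facts about a closed subspace $A\subseteq X$ with the relative topology. First, the specialization order of $A$ is the restriction of $\leq_X$. Second, compact saturated subsets of $A$ are exactly the compact saturated subsets of $X$ contained in $A$. For this second fact, let $K\subseteq A$ be compact in $A$. Then $K$ is compact in $X$. Its saturation in $X$ is $\mathord{\uparrow}_X K$, and this need not lie in $A$, since $A$ is a lower set rather than an upper set. So I will not identify $\K(A)$ with a subfamily of $\K(X)$. Instead, I will pass from $K\in\K(A)$ to $\mathord{\uparrow}_X K\in\K(X)$, and recover $K$ as $\mathord{\uparrow}_X K\cap A$. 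This recovery holds because $K$ is saturated in $A$ and the order on $A$ is the restricted order. Open sets of $A$ have the form $U\cap A$ with $U$ open in $X$. The key trick is to replace $U$ by the open set $U\cup(X\setminus A)$, which is open because $A$ is closed.

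For the strong $R$-space case, take a nonempty family $\{K_i:i\in I\}\subseteq\K(A)$ and an open set $U\cap A$ of $A$ with $\bigcap_i K_i\subseteq U\cap A$. Put $L_i=\mathord{\uparrow}_X K_i\in\K(X)$ and $V=U\cup(X\setminus A)$. Then
\[
\bigcap_i L_i\subseteq V,
\]
since any point of $\bigcap_i L_i$ lying in $A$ lies in every $L_i\cap A=K_i$, hence in $U$. Applying the strong $R$ property of $X$ gives a nonempty finite $J\subseteq I$ with $\bigcap_{i\in J}L_i\subseteq V$. Intersecting with $A$ yields
\[
\bigcap_{i\in J}K_i\subseteq V\cap A=U\cap A.
\]

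The strongly well-filtered case is the same computation. Given a filtered family $\{K_d\}$ in $\K(A)$ and $K\in\K(A)$, the family $\{\mathord{\uparrow}_X K_d\}$ is filtered in $\K(X)$, since $K_{d_3}\subseteq K_{d_1}\cap K_{d_2}$ passes to saturations. Applying strong well-filteredness of $X$ to $\{\mathord{\uparrow}_X K_d\}$, $\mathord{\uparrow}_X K$ and $V$, then intersecting with $A$, gives $K_d\cap K\subseteq U\cap A$ for some $d$.

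The strong $d$-space case is the special case of principal filters. A directed set $D\subseteq A$ is directed in $X$, and $\mathord{\uparrow}_A d=\mathord{\uparrow}_X d\cap A$. So the same argument with $\mathord{\uparrow}_X d$, $\mathord{\uparrow}_X x$ and $V$ works.

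The only point needing care is the identity $\mathord{\uparrow}_X K\cap A=K$ for $K$ saturated in $A$, together with compactness of $\mathord{\uparrow}_X K$ in $X$. Both are routine: the identity because saturation in $A$ is taken with respect to the restricted order, and compactness because the saturation of a compact set is compact. Nonemptiness of $A$ is needed only so that $\K(A)$ and $D$ make sense. I expect no real obstacle beyond this bookkeeping.
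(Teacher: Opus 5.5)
Your proof is correct. It is the standard argument behind this result, which the paper does not prove but only cites as well known: replace each $K\in\K(A)$ by $\mathord{\uparrow}_X K\in\K(X)$ (so that $\mathord{\uparrow}_X K\cap A=K$), enlarge $U$ to the open set $U\cup(X\setminus A)$, apply the property in $X$, and intersect back with $A$.

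The only thing to fix is your opening claim that $\K(A)$ consists exactly of the members of $\K(X)$ contained in $A$. That claim is false: for $A=\mathord{\downarrow}x$, $\{x\}\in\K(A)$, yet $\{x\}$ is not saturated in $X$ when $x$ is not maximal. You abandon the claim at once and never use it, so just delete it.
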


\begin{theorem}[\cite{En}, Theorem 3.3.12]\label{th1}
The sum $\bigsqcup_{s\in S} X_s$ is locally compact if and only if all spaces $X_s$ are locally compact.
\end{theorem}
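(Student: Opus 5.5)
We are to prove the result cited as Theorem~\ref{th1}: the sum $\bigsqcup_{s\in S}X_s$ is locally compact if and only if every $X_s$ is locally compact. The plan rests on the standard description of the sum topology. A set $W\subseteq\bigsqcup_{s\in S}X_s$ is open iff $W\cap X_s$ is open in $X_s$ for every $s$. In particular each summand $X_s$ is clopen in the sum, and the subspace topology on $X_s$ is its original topology. I read local compactness in the non-Hausdorff sense used in this paper: every point has a neighbourhood base of compact sets, i.e., whenever $x\in U$ with $U$ open there are an open $V$ and a compact $K$ with $x\in V\subseteq K\subseteq U$. The argument would be the same under the Hausdorff-style definition of \cite{En}.

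For the forward direction, fix $s\in S$, a point $x\in X_s$ and an open $U\subseteq X_s$ containing $x$. Since $X_s$ is open in the sum, $U$ is open in the sum. Local compactness of the sum gives an open $V$ and a compact $K$ with $x\in V\subseteq K\subseteq U\subseteq X_s$. Compactness of $K$ is intrinsic, so $K$ is compact as a subspace of $X_s$. Also $V=V\cap X_s$ is open in $X_s$. Hence $X_s$ is locally compact.

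For the converse, let $x\in W$ with $W$ open in the sum, and let $s$ be the index with $x\in X_s$. Then $W\cap X_s$ is an open subset of $X_s$ containing $x$. Local compactness of $X_s$ yields an open $V\subseteq X_s$ and a compact $K\subseteq X_s$ with $x\in V\subseteq K\subseteq W\cap X_s$. The set $V$ is open in the sum because $V\cap X_t$ is $V$ or $\emptyset$. The inclusion $X_s\hookrightarrow\bigsqcup_{t\in S} X_t$ is continuous, so $K$ is compact in the sum. Hence the sum is locally compact.

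No step is a real obstacle. The only points needing care are that each summand is open, so neighbourhoods transfer in both directions, and that compactness does not depend on the ambient space. If a Hausdorff definition of local compactness is intended, the same argument applies verbatim: Hausdorffness passes to subspaces, and it passes to sums because distinct summands are separated by clopen sets.
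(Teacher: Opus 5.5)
Your proof is correct. The paper gives no proof of its own here; it only cites \cite[Theorem 3.3.12]{En}. Your direct check from the sum topology is the routine verification behind that result: each summand is clopen, open subsets of a summand are open in the sum, and compactness does not depend on the ambient space.

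Your version is in fact the one the paper needs. In \cite{En}, local compactness is a Hausdorff notion: every point has a neighbourhood with compact closure. The paper, however, applies Theorem~\ref{th1} to the non-Hausdorff Scott spaces $A$ and $B$ in Proposition~\ref{lem:X-prop}. It then feeds the result into Lemma~\ref{lem:coincide}, which uses the non-Hausdorff notion in which every point has a neighbourhood base of compact sets. That is exactly the definition you verify.

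Your remark that the argument carries over ``verbatim'' to Engelking's definition is slightly loose, since that definition only asks for one neighbourhood with compact closure. In that setting you would either invoke the equivalence with compact neighbourhood bases for Hausdorff spaces, or observe that closures taken in $X_s$ agree with closures in the sum because $X_s$ is closed. This does not affect the case relevant to the paper.
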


\begin{definition}[\cite{Gierz}, \cite{Goubault}, \cite{Ershov2024}]
For two spaces \(X\) and \(Y\), let \((X,Y)\) be the set of all continuous functions from $X$ to $Y$.

(1) Let \(H\) be a Scott open subset of \(\mathcal{O}(X)\), \(V\) an open subset of \(Y\), and set
$$N(H \leftarrow V)=\{f \in TOP(X,Y): f^{-1}(V) \in H\}.$$
As \(H\) ranges over \(\sigma(\mathcal{O}(X))\) and \(V\) ranges over \(\mathcal{O}(Y)\), the sets \(N(H \leftarrow V)\) form a subbasis for a topology on \((X,Y)\) called the \textit{Isbell topology}. Let $[X,Y]_{\mathcal{I}}$ denote the set \((X,Y)\) endowed with the Isbell topology.

(2) The collection of all sets of the form
\[
V_{x,U} = \{f \in (X,Y) \mid f(x) \in U\}, \text{ where } x \in X \text{ and } \emptyset \neq U \in \mathcal{O}(Y)),
\]
forms a subbasis of the \textit{pointwise convergence topology}. We denote by $[X,Y]_{\mathcal{P}}$ the set $(X,Y)$ endowed with the pointwise convergence topology.

(3) The \textit{compact-open topology} on $(X,Y)$ is defined by the subbasis of open sets
$$
V_{K,U} = \{f \in (X,Y) \mid f(K) \subseteq U\}, \text{ where } K \subseteq X \text{ is compact and } \emptyset \neq U \in \mathcal{O}(Y),
$$
let $[X,Y]_{\mathcal{K}}$ denote the set \((X,Y)\) endowed with the compact-open topology.

(4) The \textit{core-open topology} on \((X,Y)\) is defined by the subbasis of open sets
$$
V_{U,W} = \{f \in (X,Y) \mid U \ll f^{-1}(W)\}, \text{ where } \emptyset \neq U \in \mathcal{O}(X) \text{ and } \emptyset \neq W \in \mathcal{O}(Y),
$$
let $[X,Y]_{\mathcal{C}}$ denote the set \((X,Y)\) endowed with the core-open topology.

(5) The \textit{natural topology} on \((X,Y)\) consists of those sets $U \subseteq (X,Y)$ which satisfy the following condition for all nets \(\mathfrak{f}\) in \((X,Y)\) and all \(f \in (X,Y)\):
\[
\text{if } \mathfrak{f} \text{ converges continuously to } f \in U \text{ then } \mathfrak{f} \text{ is eventually in } U,
\]
let $[X,Y]_{\mathcal{T}_*}$ denote the set \((X,Y)\) endowed with the natural topology.

(6) Consider the topology on \((X, Y)\) defined by the subbasis of open sets
\[
V_{\mathcal{H},U}=\{f\in (X, Y)\mid f^{-1}(U)\in\mathcal{H}\},
\]
where \(\mathcal{H}\) is an Alexandroff open subset of \(\mathcal{O}(X)\) and \(U\in\mathcal{O}(Y)\). We denote it by \(\mathcal{T}_{A}(\subseteq)^{\#}\).
\end{definition}

\begin{definition} [\cite{Ershov2020}]
A space \(X\) is an \(\alpha^*\)-space, if for each set \(U \in \mathcal{O}(X)\) and each element \(x \in U\), there are elements \(x_0, \ldots, x_n \in U\) such that \(x \in \operatorname{int}(\uparrow\!x_0 \cup \ldots \cup \uparrow\!x_n)\).
\end{definition}

For \(T_0\)-spaces \(X\) and \(Y\), for an element \(x \in X\), and for a topology \(\mathcal{T}\) on \((X,Y)\), we consider the following mappings:
$$
\xi: Y \to (X, Y)_{\mathcal{T}}, \quad \xi: y \mapsto \xi_y, \quad \text{where } \xi_y(x) = y \text{ for all } x \in X;$$
$$\zeta_{x,\mathcal{T}}: (X, Y)_{\mathcal{T}} \to Y, \quad \zeta_{x,\mathcal{T}}: f \mapsto f(x).
$$

\begin{lemma}[\cite{Ershov2022} Lemma 3, \cite{Ershov2024} Lemma 4]\label{le4}
Let \(X, Y\) be \(T_0\)-spaces.

(1) If \(\mathcal{T}\) is a topology on \((X, Y)\) such that \(\mathcal{P} \subseteq \mathcal{T} \subseteq \mathcal{T}_{A}(\subseteq)^{\#}\), then \(\xi\) is a homeomorphic embedding of \(Y\) into \((X, Y)_{\mathcal{T}}\).

(2) If \(\mathcal{T}\) is a topology on \((X, Y)\) such that \(\mathcal{P} \subseteq \mathcal{T}\), then \(\zeta_{x,\mathcal{T}}\) is continuous.
\end{lemma}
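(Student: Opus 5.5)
The plan is to prove part (2) first, directly from the definition of the pointwise convergence topology, and then to use it in part (1). Part (1) reduces to showing that $\xi$ is continuous and has a continuous left inverse. Throughout I assume $X\neq\emptyset$, which is implicit in the statement: otherwise $(X,Y)$ is a singleton and $\xi$ cannot be injective.

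For (2), fix $x\in X$ and an open set $U$ of $Y$. Then
\[
\zeta_{x,\mathcal{T}}^{-1}(U)=\{f\in(X,Y): f(x)\in U\}.
\]
If $U=\emptyset$ this set is empty; otherwise it is exactly the subbasic set $V_{x,U}$ of $\mathcal{P}$. Since $\mathcal{P}\subseteq\mathcal{T}$, it is open in $(X,Y)_{\mathcal{T}}$, so $\zeta_{x,\mathcal{T}}$ is continuous.

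For (1), first note that each constant map $\xi_y$ is continuous, so $\xi$ does map $Y$ into $(X,Y)$. Next I check continuity of $\xi$ with respect to the finer topology $\mathcal{T}_A(\subseteq)^{\#}$; continuity for $\mathcal{T}$ then follows because $\mathcal{T}\subseteq\mathcal{T}_A(\subseteq)^{\#}$. It suffices to test the subbasic sets $V_{\mathcal{H},U}$, with $\mathcal{H}$ an upper subset of $\mathcal{O}(X)$ and $U\in\mathcal{O}(Y)$. The key observation is that $\xi_y^{-1}(U)$ equals $X$ if $y\in U$ and equals $\emptyset$ otherwise. Hence $\xi^{-1}(V_{\mathcal{H},U})$ takes one of three values:
\begin{itemize}
\item it is $Y$ if $\emptyset\in\mathcal{H}$, since then also $X\in\mathcal{H}$ because $\mathcal{H}$ is an upper set;
\item it is $U$ if $X\in\mathcal{H}$ and $\emptyset\notin\mathcal{H}$;
\item it is $\emptyset$ if $X\notin\mathcal{H}$.
\end{itemize}
All three sets are open in $Y$, so $\xi$ is continuous.

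Finally, pick any $x\in X$. Then $\zeta_{x,\mathcal{T}}\circ\xi=\mathrm{id}_Y$, and $\zeta_{x,\mathcal{T}}$ is continuous by (2) since $\mathcal{P}\subseteq\mathcal{T}$. Thus $\xi$ is a continuous injection with a continuous left inverse. For every open $U\subseteq Y$ we get
\[
\xi(U)=\xi(Y)\cap\zeta_{x,\mathcal{T}}^{-1}(U),
\]
which is open in the subspace $\xi(Y)$. Therefore $\xi$ is a homeomorphism onto its image, i.e.\ a homeomorphic embedding.

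I expect no real obstacle. The only points needing care are the case analysis on whether $\emptyset$ and $X$ belong to $\mathcal{H}$, which uses that $\mathcal{H}$ is an upper set, and the standing assumption $X\neq\emptyset$.
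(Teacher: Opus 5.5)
Your proof is correct. The computation $\xi^{-1}(V_{\mathcal{H},U})\in\{Y,U,\emptyset\}$, which uses that $\mathcal{H}$ is an upper set, together with the continuous left inverse $\zeta_{x,\mathcal{T}}$ is the standard verification. The paper itself gives no proof: it cites Ershov--Schwidefsky and later relies on exactly your identity $\zeta_{x,\mathcal{T}}\circ\xi=\mathrm{id}_Y$ (Remark~\ref{recon}, Proposition~\ref{prop:Xsq-not-sd}). Your caveat that $X\neq\emptyset$ is needed is a genuine implicit hypothesis of the statement, since otherwise $\xi$ cannot be injective when $Y$ has more than one point.
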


\begin{proposition}[\cite{Ershov2024}, Proposition 1(4) and 3(3)]\label{proer}
	Let \(X, Y\) be \(T_0\)-spaces. 
	
	(1) \(\mathcal{P} \subseteq \mathcal{K} \subseteq \mathcal{I} \subseteq \mathcal{T}_* \text{ and } \mathcal{K} \subseteq \mathcal{C}\).
	
	(2) If \(X\) is an \(\alpha^*\)-space, then \(\mathcal{P} = \mathcal{K} = \mathcal{C} = \mathcal{I} = \mathcal{T}_*\) and this topology is exponential on $(X, Y)$.
\end{proposition}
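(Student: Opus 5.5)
The plan is to prove the inclusions of (1) one at a time by comparing subbasic open sets, and then to obtain (2) by showing that, for an $\alpha^*$-space $X$, the largest topology in the chain is already contained in the pointwise topology $\mathcal{P}$.

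For (1), the inclusion $\mathcal{P}\subseteq\mathcal{K}$ is immediate, since $\{x\}$ is compact and $V_{x,U}=V_{\{x\},U}$. For $\mathcal{K}\subseteq\mathcal{I}$, fix a compact $K\subseteq X$ and put $H_K=\{W\in\OO(X):K\subseteq W\}$. This family is an upper set. It is Scott open by compactness: if a directed union of open sets covers $K$, then a single member already covers $K$. Since $V_{K,U}=N(H_K\leftarrow U)$, every compact-open subbasic set is Isbell open.

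For $\mathcal{I}\subseteq\mathcal{T}_*$, let a net $(f_i)_{i\in I}$ converge continuously to $f$ with $f^{-1}(V)\in H$. For $j\in I$ define
\[
O_j=\{x\in X:\ \text{there is an open }N\ni x\text{ with }f_k(N)\subseteq V\text{ for all }k\geq j\}.
\]
Each $O_j$ is open, and the family $(O_j)$ is directed. Continuous convergence means that whenever $x_i\to x$ we have $f_i(x_i)\to f(x)$. If $f(x)\in V$ but $x\notin O_j$ for every $j$, then for each neighbourhood $N$ of $x$ and each $j$ we can choose $k\ge j$ and $x_{N,j}\in N$ with $f_k(x_{N,j})\notin V$. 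This produces a net converging to $x$ that violates continuous convergence. Hence $f^{-1}(V)\subseteq\bigcup_jO_j$. Because $H$ is Scott open and an upper set, some $O_j$ lies in $H$. Since $O_j\subseteq f_k^{-1}(V)$ for $k\ge j$, we get $f_k^{-1}(V)\in H$ for all such $k$, so the net is eventually in $N(H\leftarrow V)$.

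The inclusion $\mathcal{K}\subseteq\mathcal{C}$ is the step I expect to be the main obstacle. The natural attempt is, for $f\in V_{K,U}$, to find an open $O$ with $K\subseteq O\ll f^{-1}(U)$, so that $f\in V_{O,U}\subseteq V_{K,U}$. Such an $O$ need not exist in an arbitrary space, so I would first check the exact convention for $\ll$ and for the core-open subbasis in \cite{Ershov2024}. If necessary, I would instead express $V_{K,U}$ as a union of finite intersections of sets $V_{O,W}$, using finite subcovers of $K$.

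For (2), assume $X$ is an $\alpha^*$-space. Given (1), it suffices to show $\mathcal{T}_*\subseteq\mathcal{P}$ and $\mathcal{C}\subseteq\mathcal{P}$. For $\mathcal{T}_*$, let $\mathcal{U}\in\mathcal{T}_*$ and $f\in\mathcal{U}$, and suppose no pointwise basic neighbourhood of $f$ is contained in $\mathcal{U}$. Then we can build a net indexed by pairs consisting of a finite set of points and open neighbourhoods of their images under $f$, with every member outside $\mathcal{U}$. The $\alpha^*$ condition supplies, for each $x\in W$ with $W$ open, finitely many points $x_0,\dots,x_n$ of $W$ such that $x\in\operatorname{int}(\uparrow x_0\cup\dots\cup\uparrow x_n)$. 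Controlling $f_i$ at these finitely many points, together with monotonicity of continuous maps, should force continuous convergence of this net to $f$. That contradicts $\mathcal{U}\in\mathcal{T}_*$. The same device handles $\mathcal{C}$: the condition $O\ll f^{-1}(W)$ is witnessed by finitely many points $x_0,\dots,x_n$ with $O\subseteq\uparrow\{x_0,\dots,x_n\}\subseteq f^{-1}(W)$, so $V_{O,W}$ contains the pointwise neighbourhood $\bigcap_{m}V_{x_m,W}$ of $f$. Exponentiality then follows from the standard facts that the natural topology is conjoining and the pointwise topology is splitting, so their common value is both, hence exponential.
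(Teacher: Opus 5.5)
The paper does not prove this proposition; it quotes it from Ershov--Schwidefsky, so your argument has to stand on its own. Your proofs of $\mathcal{P}\subseteq\mathcal{K}\subseteq\mathcal{I}\subseteq\mathcal{T}_*$ are fine. There are, however, two genuine gaps.

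\textbf{The inclusion $\mathcal{K}\subseteq\mathcal{C}$.} You leave it unproved, and with the paper's definition ($\ll$ the way-below relation of $\mathcal{O}(X)$, $U\neq\emptyset$) it cannot be proved for arbitrary $T_0$-spaces. Take $X=\mathbb{Q}$. Let $O\neq\emptyset$ be open with $O\subseteq W$, and pick an irrational $r$ inside a real interval whose rational points lie in $O$. The sets $W\setminus[r-1/n,r+1/n]$ form a directed open cover of $W$, and none of them contains $O$, so $O\not\ll W$. Hence every subbasic $V_{O,W}$ is empty and $\mathcal{C}$ is indiscrete. But for $Y$ the Sierpi\'nski space, $V_{x,\{1\}}$ is a nonempty proper $\mathcal{P}$-open set, so even $\mathcal{P}\subseteq\mathcal{C}$ fails.

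What does hold is $\mathcal{K}\subseteq\mathcal{C}$ for core-compact $X$. There $f^{-1}(U)$ is the directed union of the open sets way below it, so a compact $K\subseteq f^{-1}(U)$ lies in one of them, say $O$, and then $f\in V_{O,U}\subseteq V_{K,U}$. Every $\alpha^*$-space is locally compact: given $x\in U$, we have $x\in\operatorname{int}\uparrow F\subseteq\uparrow F\subseteq U$ with $\uparrow F$ compact. So this argument repairs the step you need in (2), where your reduction ``given (1)'' currently rests on the missing inclusion.

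\textbf{Exponentiality.} This step relies on a false ``standard fact'': the natural topology is not conjoining in general. Continuous convergence implies $\mathcal{T}_*$-convergence, so $\mathcal{T}_*$ is the \emph{largest splitting} topology. It is conjoining only when an exponential topology exists, which fails, e.g., for $X=\mathbb{Q}$ and $Y$ Sierpi\'nski. Thus the equality $\mathcal{P}=\mathcal{T}_*$ only tells you twice that the common topology is splitting.

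The conjoining half must come from $\alpha^*$. If $f(x)\in V$, choose $x_0,\dots,x_n\in f^{-1}(V)$ with $x\in N=\operatorname{int}(\uparrow x_0\cup\dots\cup\uparrow x_n)$. Continuous maps are monotone and $V$ is an upper set, so evaluation sends $\bigl(\bigcap_{m\le n}V_{x_m,V}\bigr)\times N$ into $V$. Hence evaluation is continuous on $(X,Y)_{\mathcal{P}}\times X$. The same computation shows that pointwise convergence implies continuous convergence. That is exactly what your ``should force continuous convergence'' in the proof of $\mathcal{T}_*\subseteq\mathcal{P}$ needs, so state and prove it once as a lemma.

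\textbf{A smaller point in $\mathcal{C}\subseteq\mathcal{P}$.} You should justify the ``finitely many points''. By $\alpha^*$, $f^{-1}(W)$ is the directed union of the sets $\operatorname{int}\uparrow F$ with $F\subseteq f^{-1}(W)$ finite. So $O\ll f^{-1}(W)$ gives $O\subseteq\operatorname{int}\uparrow F$ for one such $F$. For $g\in\bigcap_{y\in F}V_{y,W}$ we have $\uparrow F\subseteq g^{-1}(W)$, and compactness of $\uparrow F$ then yields $O\ll g^{-1}(W)$.
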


\begin{remark}\label{recon}
For $\mathcal{T}=\mathcal{P}, \mathcal{K}, \mathcal{I}$ and $\mathcal{C}$, the Proposition~\ref{proer} and Lemma~\ref{le4} imply that \(\zeta_{x,\mathcal{T}}\) is continuous for each $x\in X$. It is easy to prove that $\mathcal{I}, \mathcal{K}, \mathcal{C}\subseteq \mathcal{T}_{A}(\subseteq)^{\#}$, so \(\xi\) is continuous.
\end{remark}

\section{Smyth and Scott power space of a strong R-space are not always strong d-spaces }

We construct a counterexample by Example 5.5 in References \cite{Xu2026} and \cite{WenBaoXu2026}. Throughout the paper, let $\mathbb{N}_\infty^+ = \mathbb{N}^+ \cup \{\infty\}$ be the chain by adding a top element $\infty$ to $\mathbb{N}^+$, and $Q=\{\omega_n: n\in \mathbb{N}^+\}\cup\{a,b\}$. Define an order on $Q$ as follows (see Fig.~\ref{fig:chain_noetherian_dcpo}):

(1) $a<\omega_n$ and $b<\omega_n$ for all $n\in \mathbb{N}^+$,

(2) for any $n,m\in \mathbb{N}^+$ with $n\neq m$, $\omega_n$ and $\omega_m$ are incomparable, and

(3) $a$ and $b$ are incomparable.

\begin{figure}[htbp]
\centering
\includegraphics[width=0.65\textwidth]{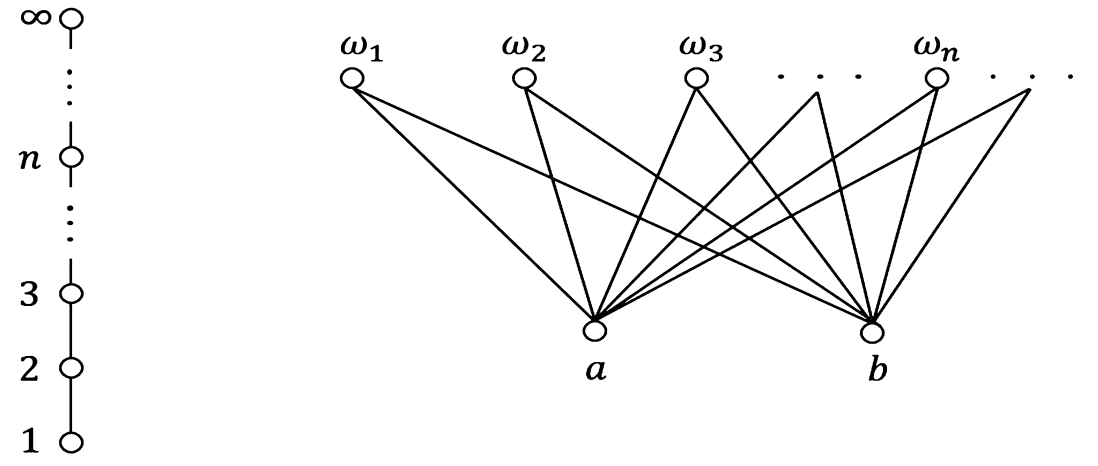}
\caption{The countable complete chain $\mathbb{N}_\infty^+$ and countable Noetherian dcpo $Q$}
\label{fig:chain_noetherian_dcpo}
\end{figure}

For convenience, we use $A$ and $B$ to stand for the Scott spaces $\Sigma\mathbb{N}_\infty^+$ and $\Sigma Q$, respectively.

\begin{remark}\label{re2}
We know that  $A$ and $B$ are strong $R$-spaces; thus they are both strongly well-filtered and strong $d$-spaces from Example 5.5 in~\cite{Xu2026} and~\cite{WenBaoXu2026}. Furthermore, the product space $A\times B$ is not a strong $d$-space.
\end{remark}

\begin{remark}\label{re1}
For each $n\in\N^{+}$, set
$$T_n=\mathord{\uparrow} n=\{k\in\N^{+}:k\geq n\}\cup\{\infty\},~T_\infty=\{\infty\}$$
$$E_a=\mathord{\uparrow} a=\{a\}\cup\Omega,
~E_b=\mathord{\uparrow} b=\{b\}\cup\Omega,
~\Omega=\{\omega_n:n\in\N^{+}\}.
$$
Then $\K(A)=\{T_n:n\in\N^{+}\}\cup\{T_\infty\}$. Moreover, each $T_n$ is a Scott open subset of $A$, and $\bigcap_{n\geq 1}T_n=T_\infty=\{\infty\}$.
Since every directed subset of $Q$ has a largest element, we have $\sigma(Q)=\alpha(Q)$.
Clearly, the compact saturated subsets of $B$ are exactly the
nonempty upper sets generated by finite sets, i.e., $\K(B)=\{\mathord{\uparrow} F:\emptyset\neq F\subseteq Q,\ F \text{ finite}\}$; $E_a\cap E_b=\Omega$, and $\Omega$ is not compact in $B$.
\end{remark}

\begin{remark}
Let $X=A\sqcup B$ be the sum of the spaces $A$ and $B$. Then the topology on \(X\) is actually the Scott topology. $A$ and
$B$ are clopen subsets of $X$, and a nonempty subset $K\subseteq X$ is compact
saturated in $X$ if and only if $K\cap A$ and $K\cap B$ are compact saturated
subsets of $A$ and $B$, respectively. In
particular, every element of $\K(X)$ is a disjoint union $H\cup L$ with
$H\in\K(A)\cup\{\emptyset \}$ and $L\in\K(B)\cup\{\emptyset \}$, where $H$ and $L$ are not both empty.
\end{remark}

\begin{theorem}\label{thm:sum-infinite}
Let $\Gamma$ be an arbitrary nonempty index set, and let
$\{X_\gamma:\gamma\in\Gamma\}$ be a family of strong $R$-spaces. Then the
 sum space $X^{'}=\bigsqcup_{\gamma\in\Gamma}X_\gamma$ is a strong $R$-space.
\end{theorem}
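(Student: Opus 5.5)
The plan is to reduce the condition in $X'$ to the condition in finitely many summands. Two facts about the topological sum will be used throughout:
\begin{itemize}
\item[(i)] each $X_\gamma$ is clopen in $X'$ and carries its own topology as the subspace topology;
\item[(ii)] the specialization order of $X'$ is the disjoint union of the specialization orders, since points in different summands are incomparable.
\end{itemize}
Consequently, a subset $K\subseteq X'$ is compact saturated if and only if $K\cap X_\gamma$ is compact saturated in $X_\gamma$ for every $\gamma$ and $K$ meets only finitely many summands. Finiteness of the support comes from the open cover $\{X_\gamma\}$, and compactness of each trace holds because $K\cap X_\gamma$ is closed in $K$. Write $\mathrm{supp}(K)=\{\gamma: K\cap X_\gamma\neq\emptyset\}$, which is finite and nonempty for $K\in\K(X')$.

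Now let $\{K_i:i\in I\}\subseteq\K(X')$ be nonempty and $U$ open with $\bigcap_{i\in I}K_i\subseteq U$.
\begin{enumerate}
\item Fix some $i_0\in I$ and put $S=\mathrm{supp}(K_{i_0})$, which is finite. Every intersection over a subfamily containing $i_0$ lives inside $\bigcup_{\gamma\in S}X_\gamma$, so it suffices to work summand by summand for $\gamma\in S$ and then take the union of the finite index sets obtained, together with $i_0$.
\item Fix $\gamma\in S$. There are two cases.
\begin{itemize}
\item[(a)] Every $K_i$ meets $X_\gamma$. Then $\{K_i\cap X_\gamma: i\in I\}$ is a nonempty family in $\K(X_\gamma)$, and its intersection is $\bigcap_i K_i\cap X_\gamma\subseteq U\cap X_\gamma$. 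The set $U\cap X_\gamma$ is open in $X_\gamma$, so the strong $R$-property of $X_\gamma$ yields a finite nonempty $J_\gamma\subseteq I$ with $\bigcap_{i\in J_\gamma}K_i\cap X_\gamma\subseteq U$.
\item[(b)] Some $K_{i_\gamma}$ misses $X_\gamma$. Then set $J_\gamma=\{i_\gamma\}$, so that $\bigcap_{i\in J_\gamma}K_i\cap X_\gamma=\emptyset\subseteq U$.
\end{itemize}
\item Let $J=\{i_0\}\cup\bigcup_{\gamma\in S}J_\gamma$, which is finite and nonempty. Then
\[
\bigcap_{i\in J}K_i\subseteq K_{i_0}\subseteq\bigcup_{\gamma\in S}X_\gamma .
\]
On each $X_\gamma$ with $\gamma\in S$, this intersection is contained in $\bigcap_{i\in J_\gamma}K_i\cap X_\gamma\subseteq U$. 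Hence $\bigcap_{i\in J}K_i\subseteq U$, as required.
\end{enumerate}

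The only point needing care is the bookkeeping of empty traces in step 2. The summand-wise family must consist of nonempty sets before the strong $R$-property of $X_\gamma$ can be applied, which is why case (b) is split off. Similarly, the finiteness of the support of $K_{i_0}$ is what keeps the final index set $J$ finite, even though $\Gamma$ is arbitrary. Apart from this, the argument is routine once the description of $\K(X')$ in terms of $\K(X_\gamma)$ is established.
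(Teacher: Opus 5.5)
Your proof is correct and follows essentially the same route as the paper: fix $i_0$, use the finiteness of the support of $K_{i_0}$, apply the strong $R$-property summand by summand, and take the union of the resulting finite index sets together with $i_0$. The one difference is that you split cases per summand $\gamma\in\operatorname{supp}(K_{i_0})$ rather than globally on whether $\Gamma_\ast$ is empty. Your split is actually cleaner: it explicitly handles summands where some trace $K_i\cap X_\gamma$ is empty, where the strong $R$-property (stated for families in $\K(X_\gamma)$) cannot be invoked directly, a point the paper's second case passes over.
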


\begin{proof}
Note that every $X_\gamma$ is clopen of $X^{'}$ for $\gamma\in\Gamma$, and $\{X_\gamma:\gamma\in\Gamma\}$
is an open cover of $X^{'}$.  For any nonempty family
$\{K_i:i\in I\}\subseteq\K(X^{'})$ and an open set $U$ of $X^{'}$, if
$\bigcap_{i\in I}K_i\subseteq U$, then there exist
$K_i^\gamma=K_i\cap X_\gamma\in\K(X_\gamma)\cup\{\emptyset\}$,
$U_\gamma=U\cap X_\gamma\in\mathcal{O}(X_\gamma)$ such that $\bigcap_{i\in I}K_i^\gamma
=\Big(\bigcap_{i\in I}K_i\Big)\cap X_\gamma\subseteq U\cap X_\gamma
=U_\gamma$. Since $K_i=\bigsqcup_{\gamma\in\Gamma}K_i^\gamma$ is a
disjoint union for each $i\in I$, we have
$\bigcap_{i\in I}K_i
=\bigsqcup_{\gamma\in\Gamma}\Big(\bigcap_{i\in I}K_i^\gamma\Big)$. To prove that $X^{'}$ is a strong $R$-space, it suffices to find a nonempty finite set $F\subseteq I$ such that $\bigcap_{i\in F}K_i\subseteq U$.

Let $\operatorname{supp}(i)=\{\gamma\in\Gamma:K_i^\gamma\neq\emptyset\}$.
As $K_i$ is compact and $\{X_\gamma:\gamma\in\Gamma\}$ is an open cover of $K_i$, the set $\operatorname{supp}(i)$ is finite for every $i\in I$. Define
$$
\Gamma_{\ast}:=\{\gamma\in\Gamma:K_i^\gamma\neq\emptyset\text{ for all }
i\in I\}=\bigcap_{i\in I}\operatorname{supp}(i).
$$
As $I\neq\emptyset$, fixing $i_0\in I$ gives
$\Gamma_{\ast}\subseteq\operatorname{supp}(i_0)$, so
$\Gamma_{\ast}$ is finite.

If $\Gamma_{\ast}=\emptyset$,
for any $\gamma\in\operatorname{supp}(i_0)$, then there is a $i_\gamma\in I$ such that $K_{i_\gamma}^\gamma=\emptyset$. It implies $K_{i_0}\cap\bigcap_{\gamma\in \operatorname{supp}(i_0)}K_{i_\gamma}=\emptyset\subseteq U$.

If $\Gamma_{\ast}\neq\emptyset$, for each $\gamma\in\operatorname{supp}(i_0)$, then $\bigcap_{i\in I}K_i^\gamma\subseteq U_\gamma.$ Hence, there is a finite subset $F_\gamma\subseteq I$ such that $\bigcap_{i\in F_\gamma}K_i^\gamma\subseteq U_\gamma$ as $X_\gamma$ is a strong R-space. Let $F=\bigcup_{\gamma\in\operatorname{supp}(i_0)}F_\gamma$. It follows that $$K_{i_0}\cap\bigcap_{i\in F}K_i=\bigsqcup_{\gamma\in\Gamma}\Big(\bigcap_{i\in F\cup\{i_0\}}K_i^\gamma\Big)\subseteq U.$$

Therefore, $X^{'}=\bigsqcup_{\gamma\in\Gamma}X_\gamma$ is a strong $R$-space.
\end{proof}

\begin{remark}
Theorem \ref{thm:sum-infinite} also holds for strongly well-filtered spaces or strong $d$-spaces, and the proof proceeds similarly.

\end{remark}

Since both $A$ and $B$ are strong $R$-spaces,
Theorem \ref{thm:sum-infinite}  yields the following.

\begin{corollary}\label{cor:X-swf}
The space $X=A\sqcup B$ is a strong $R$-space; in particular, $X$ is
strongly well-filtered and a strong $d$-space.
\end{corollary}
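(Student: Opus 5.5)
The corollary follows almost immediately from Theorem~\ref{thm:sum-infinite}. I would apply it with the two-element index set $\Gamma=\{1,2\}$, taking $X_1=A=\Sigma\mathbb{N}_\infty^+$ and $X_2=B=\Sigma Q$.

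The hypothesis to check is that $A$ and $B$ are strong $R$-spaces. This is recorded in Remark~\ref{re2}, which cites Example 5.5 of \cite{Xu2026} and \cite{WenBaoXu2026}. If one wants a self-contained check, Remark~\ref{re1} makes it easy.
\begin{itemize}
\item \emph{For $A$:} $\K(A)$ is the chain $\{T_n\}\cup\{T_\infty\}$. Any nonempty family in $\K(A)$ therefore has intersection equal either to its smallest member or to $\{\infty\}=\bigcap_n T_n$ when the family is infinite. If $\{\infty\}\subseteq U$ with $U$ Scott open, then $U$ contains some $n$, hence some $T_n$, so finitely many members already suffice.
\item \emph{For $B$:} every $K_i$ has the form $\mathord{\uparrow}F_i$ with $F_i$ finite. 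Since $Q$ has height two, the possible intersections are finitely generated upper sets, or $\Omega$, or $\emptyset$. Because every upper set is open in $B$, the only nontrivial case is an intersection equal to $\Omega$ or to a cofinite piece of it. That case is settled by choosing finitely many $F_i$ that already cut down to the same minimal elements.
\end{itemize}
I will simply cite Remark~\ref{re2} rather than redo this.

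With the hypothesis in hand, Theorem~\ref{thm:sum-infinite} gives that $X=A\sqcup B$ is a strong $R$-space. The two "in particular" assertions then follow from the Remark in Section~\ref{sec:prelim} (quoting \cite{WenBaoXu2026}): every strong $R$-space is strongly well-filtered, and every strongly well-filtered space is a strong $d$-space.

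I expect no real obstacle. The only point of care is confirming that the topological sum in Theorem~\ref{thm:sum-infinite} is the same space $X=A\sqcup B$ described in the preceding Remark, which it is by definition.
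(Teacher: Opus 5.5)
Your proposal is correct and follows the paper's own argument: Theorem~\ref{thm:sum-infinite} is applied to the two strong $R$-spaces $A$ and $B$ (Remark~\ref{re2}), and the ``in particular'' part comes from the implications strong $R$-space $\Rightarrow$ strongly well-filtered $\Rightarrow$ strong $d$-space. One small point about your optional sketch for $B$: an intersection of sets $\mathord{\uparrow}F_i$ either contains all of $\Omega$ or is a finite subset of $\Omega$, never a proper cofinite piece of it; since you cite Remark~\ref{re2} rather than relying on that sketch, this does not affect the proof.
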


We shall also use the following elementary observations about $X$.

\begin{proposition}\label{lem:X-prop}
The space $X$ is locally compact and not coherent.
\end{proposition}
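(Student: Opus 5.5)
Two things need proving: that $X$ is locally compact, and that $X$ is not coherent.

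\emph{Local compactness.} By Theorem~\ref{th1}, it suffices to show that $A$ and $B$ are each locally compact.

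For $A=\Sigma\mathbb{N}_\infty^+$, I use the sets listed in Remark~\ref{re1}. Let $U$ be an open set and $x\in U$. If $x=n\in\mathbb{N}^+$, then $T_n$ is a compact (principal) neighbourhood of $x$. It is contained in $U$ because $U$ is an upper set and $T_n=\mathord{\uparrow}n$. If $x=\infty$, then $U$ is a nonempty Scott open set. Since $\infty=\bigvee\mathbb{N}^+$, the set $U$ contains some $n$, so $U\supseteq T_n$. Here $T_n$ is open, compact, and contains $\infty$.

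For $B=\Sigma Q$, we have $\sigma(Q)=\alpha(Q)$ by Remark~\ref{re1}. Hence for each $x\in Q$, the set $\mathord{\uparrow}x$ is an open neighbourhood of $x$. It is compact, being a principal upper set, and it lies inside every open set containing $x$.

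So both summands are locally compact, and therefore $X$ is locally compact.

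\emph{Non-coherence.} Coherence requires that the intersection of two compact saturated sets be compact. Take $E_a,E_b\in\K(B)$. These are principal upper sets, hence compact and saturated in $B$. Since $B$ is clopen in $X$, they remain compact saturated in $X$ by the description of $\K(X)$.

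Their intersection is $E_a\cap E_b=\Omega$, which is not compact in $B$ by Remark~\ref{re1}. To check this directly: the family $\{\mathord{\uparrow}\omega_n=\{\omega_n\}:n\in\mathbb{N}^+\}$ consists of Alexandroff open sets and covers $\Omega$, but has no finite subcover.

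Since $B$ is a subspace of $X$, the set $\Omega$ is not compact in $X$ either. Hence $X$ is not coherent.

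There is no real obstacle in this argument. The only point needing care is the case $x=\infty$ in $A$, where Scott openness rather than mere upper-closedness must be used to find some $T_n$ inside $U$.
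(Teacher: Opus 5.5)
Your proof is correct and follows the paper's route: you reduce local compactness to the summands via Theorem~\ref{th1}, use the open compact sets $\mathord{\uparrow}x$ in $B=\Sigma Q$, and witness non-coherence by $E_a\cap E_b=\Omega$. The only difference is in $A$: you verify local compactness directly with the open compact sets $T_n$, using Scott openness at $\infty$, whereas the paper simply cites that $\mathbb{N}_\infty^+$ is an algebraic complete lattice.
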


\begin{proof}
 Obviously, $A$ is the Scott space of the algebraic complete lattice $\mathbb{N}_\infty^+$; thus, it is locally compact. For $a$, $b$ or every $\omega_n$ for $n\in \mathbb{N}^+$, there exist corresponding compact saturated open sets
${\uparrow}a$, ${\uparrow}b$ or $\{\omega_n\}$ such that $B$ is locally compact. By Theorem~\ref{th1}, $X$ is locally compact. Clearly, $E_a$ and $E_b$ are compact saturated
in $B$, but $E_a\cap E_b=\Omega$ is not compact by Remark~\ref{re1}; hence, the space $X$ is not coherent.
\end{proof}

Next, we give two proofs to show that $P_S(X)$ is not a strong $d$-space. The first is indirect and can also be used to investigate other properties of the space, while the second gives a direct proof.

\begin{proposition}\label{eq:C}
Let $\mathcal{C}=\{K\in\K(X):K\cap A\neq\emptyset\text{ and }K\cap B\neq\emptyset\}$. Then $\mathcal{C}$ is a closed subspace of $\PS(X)$.
\end{proposition}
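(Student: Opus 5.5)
We show that the complement $\K(X)\setminus\mathcal{C}$ is open in $\PS(X)$ by writing it as a union of two basic open sets of the upper Vietoris topology. By the remark preceding Theorem~\ref{thm:sum-infinite}, every $K\in\K(X)$ has the form $K=H\cup L$ with $H=K\cap A\in\K(A)\cup\{\emptyset\}$ and $L=K\cap B\in\K(B)\cup\{\emptyset\}$, not both empty. Hence $K\notin\mathcal{C}$ exactly when $K\cap B=\emptyset$ or $K\cap A=\emptyset$, that is, exactly when $K\subseteq A$ or $K\subseteq B$. This gives
\[
\K(X)\setminus\mathcal{C}=\square A\cup\square B .
\]

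Since $A$ and $B$ are clopen in the sum $X=A\sqcup B$, both are open in $X$. Therefore $\square A$ and $\square B$ are subbasic open sets of $\PS(X)$. Their union is open, and so $\mathcal{C}$ is closed in $\PS(X)$.

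There is no genuine obstacle here. The only point to check is that the decomposition $K=(K\cap A)\cup(K\cap B)$ covers every element of $\K(X)$, and the preceding remark guarantees this. As a sanity check, $\mathcal{C}$ is also a lower set in the specialization order (reverse inclusion): if $K\in\mathcal{C}$ and $K\subseteq K'$, then $K'$ meets both $A$ and $B$. This is consistent with $\mathcal{C}$ being closed.

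We also record that $\mathcal{C}\neq\emptyset$; for instance, $T_\infty\cup E_a\in\mathcal{C}$. So $\mathcal{C}$ is a nonempty closed subspace, which is what Proposition~\ref{lem:sd-closed} will require later when we pass from $\PS(X)$ to $\mathcal{C}\cong\PS(A)\times\PS(B)$.
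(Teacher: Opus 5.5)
Your proof is correct and is essentially the paper's argument: you write $\K(X)\setminus\mathcal{C}=\square A\cup\square B$, using that $A$ and $B$ partition $X$, and conclude that $\mathcal{C}$ is closed because $A$ and $B$ are open in $X$. Your added remarks are also correct, and the nonemptiness of $\mathcal{C}$ (for example, $T_\infty\cup E_a\in\mathcal{C}$) is exactly what Proposition~\ref{lem:sd-closed} needs when it is applied later.
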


\begin{proof} Since $K$ is formed by the disjoint union of compact saturated sets from $A$ and $B$, we have $K\notin \mathcal{C}$ if and only if $K\cap A=\emptyset$ or $K\cap B=\emptyset$, which is equivalent to $K\subseteq B$ or $K\subseteq A$, i.e., $K\in \square B$ or $K\in \square A$. Thus $\PS(X)\setminus \mathcal{C}=\square A\cup\square B$. Hence, $\mathcal{C}$ is a closed subspace of $\PS(X)$ as $A$ and $B$ are both open subsets of $X$.
\end {proof}

\begin{lemma}\label{prop:C-homeo}
The map $\Phi:\mathcal{C}\longrightarrow \PS(A)\times\PS(B)$ with
$\Phi(K)=(K\cap A,\;K\cap B)$ is a homeomorphism.
\end{lemma}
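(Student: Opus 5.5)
First I will check that $\Phi$ is a well-defined bijection. For $K\in\mathcal{C}$, the remark on $\K(X)$ shows that $K\cap A\in\K(A)$ and $K\cap B\in\K(B)$. Both are nonempty by the definition of $\mathcal{C}$. The inverse is $\Psi(H,L)=H\cup L$. The union of two compact sets is compact. It is saturated in $X$ because $A$ and $B$ are clopen, so the specialization order of $X$ is the disjoint union of the orders on $A$ and $B$. It meets both $A$ and $B$. Clearly $\Phi\circ\Psi$ and $\Psi\circ\Phi$ are identities, since $K=(K\cap A)\sqcup(K\cap B)$.

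For continuity of $\Phi$, it suffices to pull back subbasic open sets of the product. These have the form $\square_A V\times\PS(B)$ with $V\in\OO(A)$, and $\PS(A)\times\square_B W$ with $W\in\OO(B)$. We have
\[
\Phi^{-1}(\square_A V\times\PS(B))=\{K\in\mathcal{C}:K\cap A\subseteq V\}=\square_X(V\cup B)\cap\mathcal{C},
\]
and $V\cup B$ is open in $X$. Symmetrically, $\Phi^{-1}(\PS(A)\times\square_B W)=\square_X(A\cup W)\cap\mathcal{C}$. Both are open in the subspace $\mathcal{C}$.

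For continuity of $\Psi$, i.e.\ openness of $\Phi$, I take a subbasic open set $\square_X U\cap\mathcal{C}$ of $\mathcal{C}$ with $U\in\OO(X)$. Then $K\subseteq U$ if and only if $K\cap A\subseteq U\cap A$ and $K\cap B\subseteq U\cap B$. Hence
\[
\Phi(\square_X U\cap\mathcal{C})=\square_A(U\cap A)\times\square_B(U\cap B),
\]
which is open in the product. Because $\Phi$ is a bijection, images of finite intersections are the intersections of the images. So $\Phi$ maps basic open sets to open sets, and $\Phi$ is a homeomorphism.

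The only delicate point is the saturation and compactness bookkeeping in the bijectivity step. This is handled by the clopenness of $A$ and $B$ in $X$: a subset is saturated in $X$ if and only if its traces on $A$ and $B$ are saturated. It is compact if and only if both traces are compact, since each trace is a closed subset of the compact set $K$, and conversely a finite union of compact sets is compact.
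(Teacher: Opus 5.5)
Your proof is correct and follows the paper's argument. It uses the same bijection with inverse $(H,L)\mapsto H\cup L$ and the same key identity $\Phi(\mathcal{C}\cap\square_X U)=\square_A(U\cap A)\times\square_B(U\cap B)$; your separate pullback of the subbasic sets $\square_A V\times\PS(B)$ and $\PS(A)\times\square_B W$ only spells out the continuity direction that the paper obtains by matching the two bases.
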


\begin{proof}
Clearly, for each $K\in \mathcal{C}$, both $K\cap A$ and $K\cap B$ are nonempty compact saturated subsets of $A$ and $B$, respectively. So $\Phi$ is well-defined and injective. Conversely,
for $(H,L)\in\PS(A)\times\PS(B)$, the disjoint union $H\cup L$ is compact
saturated in $X$ and meets both $A$ and $B$, and $\Phi(H\cup L)=(H,L)$; hence,
$\Phi$ is a bijection whose inverse is $\Psi(H,L)=H\cup L$. We claim that $\Phi$ is continuous and open. For nonempty open subsets $U_A\in\OO(A)$ and $U_B\in\OO(B)$, the set $U_A\cup U_B$ is open in $X$, and
\begin{align*}
\Phi\big(\mathcal{C}\cap\square_X(U_A\cup U_B)\big) &=\Phi(\{K\in \mathcal{C}:K\subseteq U_A\cup U_B\})\\
&                                                    =\{(K\cap A, K\cap B): K\subseteq U_A\cup U_B,K\in\mathcal{C}\}\\
&                                                    =\square_A U_A\times\square_B U_B.
\end{align*}

Otherwise, assume $U_A$ or $U_B$ is empty, then $\Phi\big(\mathcal{C}\cap\square_X(U_A\cup U_B)\big)=\emptyset$.

Indeed, $K\in \mathcal{C}$ satisfies $K\subseteq U_A\cup U_B$ if and only if
$K\cap A\subseteq U_A$ and $K\cap B\subseteq U_B$. The sets
$\square_A U_A\times\square_B U_B$ form a base of
$\PS(A)\times\PS(B)$, while the sets $\mathcal{C}\cap\square_X U$ ($U\in\OO(X)$) form a
base of the subspace topology of $\mathcal{C}$. Thus, $\Phi$ is a homeomorphism.
\end{proof}

For convenience, Let $Y=\PS(A)\times\PS(B)$.
The specialization order of $Y$ is the componentwise reverse inclusion order, i.e.,
$(H,L)\leq_Y(H',L')$ if and only if $H'\subseteq H$ and $L'\subseteq L$. For
$z\in Y$, the principal filter is denoted by
$\UP{Y}{}z=\{z'\in Y:z\leq_Y z'\}$.

\begin{proposition}\label{prop:Y-not-sd}
$Y=\PS(A)\times\PS(B)$ is not a strong $d$-space.
\end{proposition}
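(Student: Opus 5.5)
The plan is to exhibit an explicit directed family $D\subseteq Y$, a point $x\in Y$ and an open set $\mathcal U\subseteq Y$ that violate Definition~\ref{df1}. The idea mimics the failure in $A\times B$ (Remark~\ref{re2}): let the $A$-coordinate shrink along the chain $T_n$, and exploit the fact that $E_a\cap E_b=\Omega$ is not compact (Remark~\ref{re1}).

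\emph{The data.} Take $D=\{(T_n,E_a):n\in\N^+\}$. Since $T_{n+1}\subseteq T_n$, we have $(T_n,E_a)\leq_Y(T_{n+1},E_a)$, so $D$ is a chain and hence directed. Take $x=(T_1,E_b)$ and
\[
\mathcal U=\bigcup_{m\in\N^+}\square_A T_m\times\square_B\{\omega_1,\dots,\omega_m\}.
\]
Each $T_m$ is Scott open in $A$, and $\{\omega_1,\dots,\omega_m\}$ is an upper set of $Q$, hence open in $B$ because $\sigma(Q)=\alpha(Q)$. So $\mathcal U$ is a union of basic open sets of $Y$ and is therefore open.

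\emph{The intersection lies in $\mathcal U$.} We have $\UP{Y}{}(T_n,E_a)=\{(H,L):H\subseteq T_n,\ L\subseteq E_a\}$. Since $\bigcap_n T_n=T_\infty$ and $\K(A)$ consists only of the $T_n$ and $T_\infty$, it follows that
\[
\bigcap_n\UP{Y}{}(T_n,E_a)\cap\UP{Y}{}x=\{(T_\infty,L):L\in\K(B),\ L\subseteq E_a\cap E_b=\Omega\}.
\]
By Remark~\ref{re1}, such $L$ are exactly the sets $\mathord{\uparrow}F$ with $F$ a finite nonempty subset of $\Omega$, and then $\mathord{\uparrow}F=F$. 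Choose $m$ with $F\subseteq\{\omega_1,\dots,\omega_m\}$. Then $T_\infty\subseteq T_m$, so $(T_\infty,F)\in\mathcal U$.

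\emph{No single stage lies in $\mathcal U$.} Fix $n$ and consider $z=(T_n,\{\omega_{n+1}\})$. We have $\{\omega_{n+1}\}=\mathord{\uparrow}\omega_{n+1}\in\K(B)$ and it is contained in $E_a\cap E_b$, so $z\in\UP{Y}{}(T_n,E_a)\cap\UP{Y}{}x$. Suppose $z\in\square_AT_m\times\square_B\{\omega_1,\dots,\omega_m\}$ for some $m$. The first coordinate forces $T_n\subseteq T_m$, i.e.\ $m\leq n$. The second forces $\omega_{n+1}\in\{\omega_1,\dots,\omega_m\}$, i.e.\ $m\geq n+1$. This is a contradiction, so $z\notin\mathcal U$, and $Y$ is not a strong $d$-space.

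The main obstacle is choosing $\mathcal U$ so that it captures every finite $F\subseteq\Omega$ paired with $T_\infty$, yet misses a witness at every finite stage. Coupling the index $m$ in both coordinates is what achieves this, and the rest is routine verification.
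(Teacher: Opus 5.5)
Your proof is correct and is essentially the paper's argument: it uses the same directed chain $\{(T_n,E_a):n\in\mathbb{N}^+\}$, the same point $(T_1,E_b)$, and the same witnesses $(T_n,\{\omega_{n+1}\})$. Your open set, indexed by the initial segments $\{\omega_1,\dots,\omega_m\}$, is in fact the same set as the paper's $\bigcup_F \square_A T_{M(F)}\times\square_B F$ with $M(F)=\max\{j:\omega_j\in F\}$, because $\square_B F\subseteq\square_B\{\omega_1,\dots,\omega_{M(F)}\}$.
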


\begin{proof}
Define $d_n=(T_n,E_a)\in Y$  for every $n\in \N^{+}$,
and $x=(T_1,E_b)\in Y$, where $T_n$, $E_a$ and $E_b$ are those as in Remark~\ref{re1}. Since $T_{n+1}\subseteq T_n$ for every $n$, we have
$d_n\leq_Y d_{n+1}$ for all $n\in \N^{+}$. It follows that
$D=\{d_n:n\in\N^{+}\}$
is a directed subset of $Y$. We verify that the directed set $D$ and the point $x$
violate the defining condition of a strong $d$-space.

Note that for each $(H,L)\in Y$,
$(H,L)\in \bigcap_{n\in\N^{+}}{\UP{Y}{}d_n}\cap{\UP{Y}{}x}$
if and only if $d_n\leq_Y(H,L)$ and $x\leq_Y(H,L)$ for all $n\in\N^{+}$, that is,
$H\subseteq\bigcap_{n\in\N^{+}}T_n\cap T_1=\{\infty\}$ and
$L\subseteq E_a\cap E_b=\Omega$. As $H$ is nonempty, $H=T_\infty=\{\infty\}$; by Remark ~\ref{re1}, $L$ ranges exactly over the nonempty finite subsets of $\Omega$. Therefore,
$$\bigcap_{n\in\N^{+}}{\UP{Y}{}d_n}\cap{\UP{Y}{}x}
=\big\{(\{\infty\},F):F\subseteq\Omega,\ 0<|F|<\infty\big\}.$$
Let $M(F)=\max\{j\in\N^{+}:\omega_j\in F\}$ for any
nonempty finite set $F\subseteq\Omega$,
define the subset
$$\mathcal U=\bigcup_{\substack{F\subseteq\Omega\\0<|F|<\infty}}
\Big(\square_A T_{M(F)}\times\square_B F\Big)$$
of $Y$. Since $T_{M(F)}$, $F$ are Scott open sets of $A$ and $B$, respectively, we obtain that $\mathcal U$ is an open set of $Y$. For every nonempty finite $F\subseteq\Omega$,
$(\{\infty\},F)\in\square_A T_{M(F)}\times\square_B F\subseteq\mathcal U$ by $\{\infty\}\subseteq T_{M(F)}$ and $F\subseteq F$,
it implies that
$\bigcap_{n\in\N^{+}}{\UP{Y}{}d_n}\cap{\UP{Y}{}x}\subseteq\mathcal U$.

Fix $n\in\N^{+}$. Then $(T_{n},\{\omega_{n+1}\})\in {\UP{Y}{}d_n}\cap{\UP{Y}{}x}$. But $(T_{n},\{\omega_{n+1}\})\not\in \mathcal{U}$. Otherwise, $T_{n}\in \square_A T_{M(F)}$ implies that $M(F)\le n$, it contradicts $M(F)=M(\{\omega_{n+1}\})=n+1>n$. Hence, $Y$ is not a strong $d$-space by Definition~\ref{df1}.
\end{proof}

\begin{corollary}\label{cor:Y-not-swf}
$Y=\PS(A)\times\PS(B)$  is neither strongly well-filtered nor a strong $R$-space.
\end{corollary}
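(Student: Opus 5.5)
The corollary follows from Proposition~\ref{prop:Y-not-sd} together with the implications recorded in the Remark after Definition~\ref{def:Rspace}. By that Remark, every strong $R$-space is strongly well-filtered, and every strongly well-filtered space is a strong $d$-space. So being a strong $d$-space is the weakest of the three properties. I will argue by contraposition in two steps.

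First, suppose $Y=\PS(A)\times\PS(B)$ were strongly well-filtered. Then, by the Remark, $Y$ would be a strong $d$-space. This contradicts Proposition~\ref{prop:Y-not-sd}, so $Y$ is not strongly well-filtered.

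Second, suppose $Y$ were a strong $R$-space. Then, again by the Remark, $Y$ would be strongly well-filtered, which the first step has just excluded. Hence $Y$ is not a strong $R$-space either.

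The main obstacle is simply to apply the implications in the right direction. If one wants a self-contained check, one can also see directly why strong well-filteredness implies the strong $d$-space property. For a directed $D\subseteq Y$, the family $\{\UP{Y}{}d:d\in D\}$ is a filtered family of compact saturated sets, since principal filters are compact. Taking $K=\UP{Y}{}x$ in the definition of strong well-filteredness then yields exactly the defining implication of a strong $d$-space. Similarly, the strong $R$-space property applied to the family $\{K_d:d\in D\}\cup\{K\}$, combined with the filteredness of $\{K_d:d\in D\}$, gives strong well-filteredness. No further computation is needed.
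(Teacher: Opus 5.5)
Your proposal is correct and matches the paper's (implicit) argument: the corollary follows from Proposition~\ref{prop:Y-not-sd} together with the implications strong $R$-space $\Rightarrow$ strongly well-filtered $\Rightarrow$ strong $d$-space recorded in the Remark after Definition~\ref{def:Rspace}. Your self-contained verification of these two implications is also sound.
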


Now, we can directly show that \(P_S(X)\) is not a strong \(d\)-space.

\begin{proposition}
	\(Z=P_S(X)\) is not a strong \(d\)-space.
\end{proposition}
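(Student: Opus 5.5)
The paper has just said the second proof is direct, so I will transport the counterexample of Proposition~\ref{prop:Y-not-sd} into $Z=\PS(X)$ by hand, using $\Psi(H,L)=H\cup L$. Set $D=\{K_n:n\in\N^{+}\}$ with $K_n=T_n\cup E_a$, and take the point $K=T_1\cup E_b$. Each $K_n$ is a nonempty compact saturated subset of $X$ by the description of $\K(X)$. Since $K_{n+1}\subseteq K_n$, the family is a chain in the specialization order of $Z$ (reverse inclusion), hence directed.

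First I compute the intersection of the principal filters. We have $L\in\bigcap_n\UP{Z}{}K_n\cap\UP{Z}{}K$ iff $L\subseteq\bigcap_n T_n\cap T_1\cup(E_a\cap E_b)=\{\infty\}\cup\Omega$. Here I use that $A$ and $B$ are disjoint, so intersections split componentwise. By Remark~\ref{re1}, every such $L$ has the form $H\cup F$ with $H\in\{\emptyset,\{\infty\}\}$, $F\subseteq\Omega$ finite, and $L\neq\emptyset$.

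Next I choose the open set
$$\mathcal U=\square_X A\;\cup\bigcup_{\substack{F\subseteq\Omega\\0<|F|<\infty}}\square_X\big(T_{M(F)}\cup F\big).$$
Each set in this union is open in $Z$, because $T_{M(F)}\cup F$ is open in $X$: $T_{M(F)}$ is Scott open in $A$, and finite subsets of $\Omega$ are upper sets in $Q$, hence Scott open by $\sigma(Q)=\alpha(Q)$. To check that $\mathcal U$ contains the intersection:
\begin{itemize}
\item if $F=\emptyset$, then $L=\{\infty\}\in\square_X A$;
\item otherwise $L\subseteq\{\infty\}\cup F\subseteq T_{M(F)}\cup F$.
\end{itemize}

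Finally, for fixed $n$, I take $L_n=T_n\cup\{\omega_{n+1}\}$. It lies in $\UP{Z}{}K_n\cap\UP{Z}{}K$, since $T_n\subseteq T_n\cap T_1$ and $\omega_{n+1}\in E_a\cap E_b$. I claim $L_n\notin\mathcal U$:
\begin{itemize}
\item $L_n\not\subseteq A$, so $L_n\notin\square_X A$;
\item if $L_n\subseteq T_{M(F)}\cup F$, then $\omega_{n+1}\in F$ forces $M(F)\ge n+1$, while $T_n\subseteq T_{M(F)}$ forces $M(F)\le n$, a contradiction.
\end{itemize}
Hence the condition in Definition~\ref{df1} fails, and $Z$ is not a strong $d$-space.

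The only delicate point is the extra summand $\square_X A$. It is needed because, unlike in $\mathcal{C}$, elements of $Z$ may miss $B$, namely $L=\{\infty\}$. I must confirm that adding it does not capture any $L_n$, which holds because every $L_n$ meets $B$.
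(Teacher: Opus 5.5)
Your proof is correct and is essentially the paper's direct argument: you use the same directed chain $T_n\cup E_a$, the same point $T_1\cup E_b$, the same open set built from the sets $\square(T_{M(F)}\cup F)$, and the same witnesses $T_n\cup\{\omega_{n+1}\}$. One correction to your closing remark: the extra summand $\square_X A$ is harmless but not needed, since $\infty\in T_m$ for every $m$ gives $\{\infty\}\subseteq T_{M(F)}\cup F$ for every nonempty finite $F\subseteq\Omega$, which is exactly how the paper covers $L=\{\infty\}$.
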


\begin{proof}
	Let $M(F)=\max\{j\in\N^{+}:\omega_j\in F\}$ for any
	nonempty finite set $F\subseteq\Omega$, define subsets 
	$$\mathcal U=\bigcup_{\substack{F\subseteq\Omega\\0<|F|<\infty}}
	\square (T_{M(F)}\cup F), $$ for each $n\in\mathbb{N^+}$,	$\uparrow_Z d_n=\uparrow_Z (T_n\cup E_a),~\uparrow_Z d=\uparrow_Z (T_1\cup E_b).$ We take the openness of $\mathcal{U}$, by $\square (T_{M(F)}\cup F)\in\mathcal{O}(P_S(X))$ for each nonempty finite subset $F\subseteq\Omega$. Then $\{\uparrow_Z d_n:n\in\mathbb{N}^+\}$ is directed.  
	
	Claim 1. $\bigcap_{n\in\mathbb{N}^+}\uparrow_Z d_n\cap\uparrow_Z d=\{\{\infty\}\cup F:F\subseteq\Omega,F~is~finite\}\cup\{F\subseteq \Omega:0<|F|<\infty\}.$
	
	For each $K\in Z$, $K\in \bigcap_{n\in\mathbb{N}^+}\uparrow_Z d_n\cap\uparrow_Z d$ if and only if $K\cap A\subseteq \bigcap_{n\in\mathbb{N^+}}T_n=\{\infty\}$, $K\cap B\subseteq E_a\cap E_b=\Omega$. Hence, the equality follows.
	
	Claim 2. $\bigcap_{n\in\mathbb{N}^+}\uparrow_Z d_n\cap\uparrow_Z d\subseteq\mathcal{U}.$
	
	Note that $\infty\in T_n$ for each $n\in\mathbb{N}^+$. Hence, for each nonempty finite subset $F\subseteq \Omega$, $$F\subseteq (\{\infty\}\cup F)\subseteq  \square (T_{M(F)}\cup F)\subseteq \mathcal{U}.$$ Specially, $\{\infty\}\in\square (T_{M(F)}\cup F)$, for each nonempty finite subset $F\subseteq \Omega$.
	
	Claim 3. For each $n\in\mathbb{N}^+$, $\uparrow_Z d_n\cap\uparrow_Z d\not\subseteq\mathcal{U}.$
	
	Given $n\in\mathbb{N}^+$, $T_{n}\cup\{\omega_{n+1}\}\in {\UP{Z}{}d_n}\cap{\UP{Z}{}d}$. If $T_{n}\cup\{\omega_{n+1}\}\in \mathcal{U}$, then $T_{n}\cup\{\omega_{n+1}\}\in \square (T_{M(F)}\cup F)$ for some nonempty finite subset $F\subseteq \Omega$. It follows that $T_{n}\subseteq T_{M(F)}$ and $\{\omega_{n+1}\}\subseteq F$. It implies that $M(F)\le n$. But it contradicts $M(F)\geq M(\{\omega_{n+1}\})=n+1>n$. Hence, $Z=P_S(X)$ is not a strong $d$-space by Definition~\ref{df1}.
\end{proof}

We can now assemble the main result, and obtain negative answers to Questions \ref{q1}, \ref{q2}, \ref{q3} and \ref{q4} in References~\cite{Xu2026} and ~\cite{WenBaoXu2026}.

\begin{theorem}\label{thm:main}
Both $\PS(X)$  and $P_\sigma(X)$ fail to be strong \(d\)-spaces, in particular, not strongly well-filtered or a strong $R$-space.
\end{theorem}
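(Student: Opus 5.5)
The plan is to assemble results already established in this section, with one structural detour to get the Scott power space. First, $\PS(X)$ fails to be a strong $d$-space, and there are two routes to this. The direct route is the preceding proposition showing $Z=\PS(X)$ is not a strong $d$-space. The indirect route runs through $\mathcal{C}$:
\begin{itemize}
\item Proposition~\ref{eq:C} shows that $\mathcal{C}$ is a nonempty closed subspace of $\PS(X)$.
\item Lemma~\ref{prop:C-homeo} shows that $\mathcal{C}$ is homeomorphic to $Y=\PS(A)\times\PS(B)$.
\item If $\PS(X)$ were a strong $d$-space, Proposition~\ref{lem:sd-closed} would make $\mathcal{C}$, and hence $Y$, a strong $d$-space. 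This contradicts Proposition~\ref{prop:Y-not-sd}.
\end{itemize}
I would record both routes, since the indirect one generalizes to the other properties.

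Second, I transfer the result to $P_\sigma(X)$. By Proposition~\ref{lem:X-prop}, $X$ is locally compact. By Corollary~\ref{cor:X-swf}, $X$ is strongly well-filtered, hence in particular well-filtered, because strongly well-filtered spaces are well-filtered. Taking $K$ to be a filtered member, or any compact saturated set containing the intersection, reduces the strong condition to the ordinary one. Lemma~\ref{lem:coincide} then gives $\PS(X)=P_\sigma(X)$ as topological spaces, so $P_\sigma(X)$ is not a strong $d$-space either.

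The only point needing care here is the well-filteredness of $X$. If one prefers not to rely on the implication ``strongly well-filtered implies well-filtered'' quoted in the introduction, there is a direct argument. Given a filtered family $\{K_d\}$ with $\bigcap K_d\subseteq U$, apply strong well-filteredness with $K=K_{d_0}$ for a fixed $d_0$. This gives some $d$ with $K_d\cap K_{d_0}\subseteq U$, and filteredness then yields $d'$ with $K_{d'}\subseteq K_d\cap K_{d_0}\subseteq U$. I expect this to be the only subtle step.

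Finally, for the ``in particular'' clause, I invoke the remark that every strong $R$-space is strongly well-filtered and every strongly well-filtered space is a strong $d$-space. By contraposition, neither $\PS(X)$ nor $P_\sigma(X)$ is strongly well-filtered or a strong $R$-space. Since every $R$-space is a strong $d$-space, as noted in the introduction, neither space is an $R$-space, which answers Questions~\ref{q1}--\ref{q4} negatively.
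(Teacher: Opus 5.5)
Your proposal is correct and follows the paper's proof. The closed subspace $\mathcal{C}\cong\PS(A)\times\PS(B)$, together with Proposition~\ref{lem:sd-closed} and Proposition~\ref{prop:Y-not-sd}, handles $\PS(X)$; Lemma~\ref{lem:coincide} then transfers the failure to $P_\sigma(X)$, and you usefully make explicit the well-filteredness of $X$ that the paper leaves implicit. One small slip: the parenthetical alternative of taking $K$ to be ``any compact saturated set containing the intersection'' only yields $K_d\cap K\subseteq U$, not $K_d\subseteq U$, but your explicit argument with $K=K_{d_0}$ followed by filteredness is correct.
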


\begin{proof}
Combining Proposition~\ref{lem:sd-closed}, Proposition~\ref{eq:C}, Lemma~\ref{prop:C-homeo} and Proposition~\ref{prop:Y-not-sd}, we conclude that $\PS(X)$ is not a strong \(d\)-space. On the other hand, by Lemma~\ref{lem:coincide} and Lemma~\ref{lem:X-prop},   $\PS(X)=P_\sigma(X)$; hence, $P_\sigma(X)$ is not a strong \(d\)-space either.
\end{proof}

\section{Function spaces equipped with Isbell topology}

In this section, we mainly discuss the function spaces of strong $R$-spaces, strongly well-filtered and strong $d$-spaces. To this end, we first introduce some notations and properties. $X=A\sqcup B$ still denotes the space mentioned above.

\begin{proposition}\label{prop:Xp-not-sd}
The space $X\times X$ is not a strong $d$-space.
\end{proposition}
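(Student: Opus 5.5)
The plan is to realize $A\times B$ as a closed subspace of $X\times X$ and then invoke Proposition~\ref{lem:sd-closed} together with Remark~\ref{re2}, which says that $A\times B$ is not a strong $d$-space. Since $X=A\sqcup B$ and both $A$ and $B$ are clopen in $X$, the set $A\times B$ is clopen in $X\times X$. Its complement is
\[
(X\times X)\setminus(A\times B)=(B\times X)\cup(X\times A),
\]
which is a union of open rectangles. In particular, $A\times B$ is a nonempty closed subset.

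Next I check that the subspace topology on $A\times B$ inherited from $X\times X$ is the product topology of $A$ and $B$. This holds because $A$ and $B$ carry their original topologies as subspaces of the sum $X$. Moreover, a subspace of a product that is itself a product of subspaces carries the product of the subspace topologies. Therefore the subspace $A\times B$ of $X\times X$ is exactly the space $A\times B$ of Remark~\ref{re2}.

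Now suppose $X\times X$ were a strong $d$-space. Proposition~\ref{lem:sd-closed} would then make the closed subspace $A\times B$ a strong $d$-space, contradicting Remark~\ref{re2}. Hence $X\times X$ is not a strong $d$-space.

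The only real dependency is Remark~\ref{re2}, which the paper cites from Example~5.5 of \cite{Xu2026}. If a self-contained argument is wanted, I would mirror the proof of Proposition~\ref{prop:Y-not-sd} in $A\times B$. Take $d_n=(n,a)$ and $x=(1,b)$. Then
\[
\bigcap_n\mathord{\uparrow}d_n\cap\mathord{\uparrow}x=\{\infty\}\times\Omega,
\]
since $\mathord{\uparrow}a\cap\mathord{\uparrow}b=\Omega$ and $\bigcap_n\mathord{\uparrow}n=\{\infty\}$ in $\mathbb{N}_\infty^+$. Let
\[
U=\bigcup_{j\geq 1}\mathord{\uparrow}j\times\{\omega_j\}.
\]
This set is open, because each $\mathord{\uparrow}j$ is Scott open in $A$ and each singleton $\{\omega_j\}$ is an upper set in $B$, hence open since $\sigma(Q)=\alpha(Q)$. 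The intersection above is contained in $U$. However, for each $n$ the point $(n,\omega_{n+1})$ lies in $\mathord{\uparrow}d_n\cap\mathord{\uparrow}x$ but not in $U$, because $n\notin\mathord{\uparrow}(n+1)$. The only point needing care in this direct argument is the openness of $U$, and that is immediate.
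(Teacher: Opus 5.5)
Your proof is correct and follows the paper's route. The paper likewise argues that if $X\times X$ were a strong $d$-space, then its closed subspace $A\times B$ would be one by Proposition~\ref{lem:sd-closed}, contradicting Remark~\ref{re2}; you additionally make explicit the (correct) facts that $A\times B$ is clopen in $X\times X$ and carries the product topology, and your direct check with $d_n=(n,a)$, $x=(1,b)$ and $U=\bigcup_{j}\mathord{\uparrow}j\times\{\omega_j\}$ is also valid, being the pointwise analogue of the paper's proof of Proposition~\ref{prop:Y-not-sd}.
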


\begin{proof}
Suppose that $X\times X$ is a strong $d$-space, then every nonempty closed subspace of $X\times X$ is also a strong $d$-space by Lemma~\ref{lem:sd-closed}, which  contradicts Remark~\ref{re2}. Consequently, $X\times X$ is not a strong $d$-space.
\end{proof}

\begin{proposition}\label{prop:Xsq-not-sd}
Let $T, Z$ be $T_{0}$-spaces and $[T,Z]_\mathcal{T}$ be strong $d$-space \rm{(strongly well-filtered, strong $R$-space, respectively)}, where $\mathcal{T}=\mathcal{P}, \mathcal{K}, \mathcal{I}$ or $\mathcal{C}$. Then $Z$ is a strong $d$-space (strongly well-filtered space, strong $R$-space, respectively).
\end{proposition}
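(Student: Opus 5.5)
We will show that $Z$ is a retract of $[T,Z]_\mathcal{T}$ and then apply Proposition~\ref{prop:retractive}. We assume $T\neq\emptyset$; otherwise $(T,Z)$ has a single element and carries no information about $Z$. Fix a point $t_0\in T$ and take the two maps introduced before Lemma~\ref{le4}: the constant-map embedding $\xi:Z\to[T,Z]_\mathcal{T}$, $y\mapsto\xi_y$, and the evaluation map $\zeta_{t_0,\mathcal{T}}:[T,Z]_\mathcal{T}\to Z$, $f\mapsto f(t_0)$.

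\textbf{Step 1 (continuity).} By Remark~\ref{recon}, for $\mathcal{T}=\mathcal{P},\mathcal{K},\mathcal{I},\mathcal{C}$ the map $\zeta_{t_0,\mathcal{T}}$ is continuous. The argument is that Proposition~\ref{proer}(1) gives $\mathcal{P}\subseteq\mathcal{T}$, and then Lemma~\ref{le4}(2) applies. The same remark gives $\mathcal{P}\subseteq\mathcal{T}\subseteq\mathcal{T}_A(\subseteq)^{\#}$, so $\xi$ is continuous by Lemma~\ref{le4}(1).

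This inclusion into $\mathcal{T}_A(\subseteq)^{\#}$ is the only point needing care, and we would check it directly on subbasic sets.
\begin{itemize}
\item[] For the pointwise topology, $V_{t,U}=V_{\mathcal H,U}$ with $\mathcal H=\{W\in\OO(T):t\in W\}$, which is an upper set.
\item[] For the compact-open topology, $V_{K,U}=V_{\mathcal H,U}$ with $\mathcal H=\{W:K\subseteq W\}$.
\item[] For the Isbell topology, Scott open families are in particular upper sets.
\item[] For the core-open topology, $V_{U,W}=V_{\mathcal H,W}$ with $\mathcal H=\{O:U\ll O\}$, which is an upper set.
\end{itemize}
Alternatively, one can check continuity of $\xi$ by hand without the inclusion. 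For a subbasic set $N$ of the form $\{f:f^{-1}(V)\in\mathcal H\}$, we have $\xi_y^{-1}(V)=T$ if $y\in V$ and $\emptyset$ otherwise. Hence $\xi^{-1}(N)$ is one of $V$, $Z$, $Z\setminus V$ or $\emptyset$. The bad case $Z\setminus V$ would require $\emptyset\in\mathcal H$ and $T\notin\mathcal H$, and this cannot happen because $\mathcal H$ is an upper set.

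\textbf{Step 2 (retraction identity).} For every $y\in Z$ we have $\zeta_{t_0,\mathcal{T}}(\xi(y))=\xi_y(t_0)=y$, so $\zeta_{t_0,\mathcal{T}}\circ\xi=\mathrm{id}_Z$. Hence $Z$ is a retract of $[T,Z]_\mathcal{T}$, with section $\xi$ and retraction $\zeta_{t_0,\mathcal{T}}$.

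\textbf{Step 3 (conclusion).} Proposition~\ref{prop:retractive} states that retracts of strong $d$-spaces, strongly well-filtered spaces and strong $R$-spaces inherit the respective property. Applying it gives the claim for $Z$ in all three cases simultaneously.

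The main obstacle is Step 1, the verification that each topology $\mathcal{T}$ lies between $\mathcal{P}$ and $\mathcal{T}_A(\subseteq)^{\#}$. The remaining steps are formal.
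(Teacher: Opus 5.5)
Your proposal is correct and matches the paper's proof: the paper likewise uses Remark~\ref{recon} to get $\zeta_{x,\mathcal{T}}\circ\xi=\mathrm{id}_Z$, so $Z$ is a retract of $[T,Z]_\mathcal{T}$, and then concludes by Proposition~\ref{prop:retractive}. Your subbasic-set check that $\mathcal{P},\mathcal{K},\mathcal{I},\mathcal{C}\subseteq\mathcal{T}_{A}(\subseteq)^{\#}$, which the paper only calls ``easy'', is correct, and so is your remark that $T\neq\emptyset$ is needed: for empty $T$ the function space is a single point and the statement fails, a hypothesis the paper leaves implicit.
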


\begin{proof}
 By Remark~\ref{recon}, we obtain
$\zeta_{x,\mathcal{T}}\circ \xi = \mathrm{id}_Z$.
Thus \(Z\) is a retract of \([T, Z]_{\mathcal{T}}\).
It follows from Proposition~\ref{prop:retractive} that if \([T, Z]_\mathcal{T}\) is a strongly $d$-space (strongly well-filtered, strong $R$-space, respectively), then \(Z\) is also a strongly $d$-space (strongly well-filtered, strong $R$-space, respectively).
\end{proof}

However, the converse of Proposition~\ref{prop:Xsq-not-sd} does not hold, as shown below.

\begin{theorem}\label{prop:finite-discrete-isbell}
Let $T$ be a finite discrete space with $n$ points $\{t_{1}, t_{2}, \ldots, t_{n}\}$ and let $Z$ be any
$T_{0}$-space. Then $[T,Z]_{\mathcal{I}}$ is homeomorphic to $Z^{n}=Z\times\cdots\times Z$ $(product~ topology)$, via the
evaluation map
\[
h:[T,Z]_{\mathcal{I}}\longrightarrow Z^{n},\qquad f\mapsto (f(t_{1}), f(t_{2}), \ldots, f(t_{n})).
\]
\end{theorem}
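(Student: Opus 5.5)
Since $T$ is discrete, every function $T\to Z$ is continuous, so $(T,Z)=Z^T$ and $h$ is a bijection with inverse $(z_1,\dots,z_n)\mapsto f$, where $f(t_i)=z_i$. The plan is to show that $h$ is continuous and open. The cleanest route is Proposition~\ref{proer}(2). A finite discrete space is an $\alpha^*$-space: for $x\in U$, take $x_0=x$; then $\uparrow x_0=\{x\}$ is open, so $x\in\operatorname{int}(\uparrow x_0)$. Hence $\mathcal{I}=\mathcal{P}$ on $(T,Z)$, and it suffices to show that $h$ is a homeomorphism from $[T,Z]_{\mathcal{P}}$ to $Z^n$.

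For the pointwise topology, the subbasic open set $V_{t_i,U}=\{f: f(t_i)\in U\}$ is mapped by $h$ exactly onto $\pi_i^{-1}(U)=Z\times\cdots\times U\times\cdots\times Z$, with $U$ in the $i$-th factor. These sets $\pi_i^{-1}(U)$ form the standard subbasis of the product topology on $Z^n$. Since $h$ is a bijection, images and preimages commute with finite intersections and arbitrary unions. Therefore $h$ maps a subbasis onto a subbasis, and so it is a homeomorphism. For $U=\emptyset$ both sides are empty, so there is no issue there.

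To avoid depending on the cited proposition, I would also verify $\mathcal{I}=\mathcal{P}$ directly. On one hand $\mathcal{P}\subseteq\mathcal{I}$ by Proposition~\ref{proer}(1). For the converse, note that $\mathcal{O}(T)$ is the finite powerset lattice. Every upper set in it is Scott open, and every Scott open $H$ is a finite union of principal filters $\uparrow S$ with $S\subseteq T$. Then
\[
N(\uparrow S\leftarrow V)=\{f: S\subseteq f^{-1}(V)\}=\bigcap_{t\in S}V_{t,V},
\]
which is a finite intersection of pointwise subbasic sets; when $S=\emptyset$ it is the whole space. Since $N(H\leftarrow V)$ is the union of these sets over the minimal elements $S$ of $H$, the Isbell subbasic sets are $\mathcal{P}$-open, and so $\mathcal{I}\subseteq\mathcal{P}$.

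The only mild obstacle is the bookkeeping in this identification $\mathcal{I}=\mathcal{P}$, in particular handling the degenerate cases $H\ni\emptyset$ and $V=\emptyset$. Once $\mathcal{I}=\mathcal{P}$ is established, the homeomorphism with $Z^n$ is immediate from the subbasis correspondence.
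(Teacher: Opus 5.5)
Your proof is correct. Your direct verification of $\mathcal{I}=\mathcal{P}$ is exactly the paper's argument: each Scott open (i.e.\ upper) family in the finite lattice $\mathcal{O}(T)$ is a finite union of principal filters, so each $N(H\leftarrow V)$ is a finite union of finite intersections of pointwise subbasic sets, and the pointwise subbasis is then matched with the product subbasis. Your shortcut through Proposition~\ref{proer}(2), using that $T$ is an $\alpha^*$-space, is also valid; it is the same fact the paper invokes afterwards for its corollary on the other function space topologies.
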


\begin{proof}
As $T$ is discrete, every function $T\to Z$ is continuous; hence, the underlying set
of $[T,Z]_{\mathcal{I}}$ is $Z^{T}$, and we identify $Z^{n}\cong Z^{T}=\prod_{x\in T}Z$. In
particular, $h$ is a bijection.

Obviously, the lattice $\OO(T)=\mathcal P(T)$ is finite, then its Scott topology is the
Alexandroff topology and the Scott open subsets of $\mathcal P(T)$ are exactly
the upward closed families of subsets of $T$. Since $\mathcal P(T)$ is
finite, every upper set $\mathcal{U}$ is the union of the principal filters generated by
its finitely many minimal elements, i.e.,
$\mathcal{U}=\bigcup_{M\in\min \mathcal{U}}\mathord{\uparrow}M$ with
$\mathord{\uparrow}M=\{A\subseteq T:M\subseteq A\}$.
For $V\in\OO(Z)$,
$$N(\mathcal{U}\leftarrow V)=\{f\in Z^{T}: f^{-1}(V)\in \mathcal{U}\}
=\bigcup_{M\in\min \mathcal{U}}\{f\in Z^{T}:M\subseteq f^{-1}(V)\}
=\bigcup_{M\in\min \mathcal{U}}(\bigcap_{x\in M}N_x(V)),
$$
where $N_x(V):=\{f\in Z^{T}:f(x)\in V\}$. Thus every subbasic set of the
Isbell topology is a finite union of finite intersections of the sets
$N_x(V)$. Conversely, for the upper set
$\mathcal{U}=\mathord{\uparrow}\{x\}$,
\[
N(\mathord{\uparrow}\{x\}\leftarrow V)=\{f\in Z^{T}:f(x)\in V\}=N_x(V).
\]
Hence, the Isbell topology on $(T,Z)$ equals the topology generated by the
subbasis $\{N_x(V):x\in T,\ V\in\OO(Z)\}$, which is the topology of pointwise
convergence, i.e., the initial topology for the evaluation maps
$\mathrm{ev}_x(f)=f(x)$.

Let $\pi_x:Z^{T}\to Z$ denote the $x$-th projection. The product topology on
$Z^{T}$ is generated by the subbasis
$\{\pi_x^{-1}(V):x\in T,\ V\in\OO(Z)\}$, and
\[
h^{-1}\bigl(\pi_x^{-1}(V)\bigr)=\{f:f(x)\in V\}=N_x(V)
\]
for all $x\in T$ and $V\in\OO(Z)$. As a result, $h$ is continuous and open. We conclude that $h$ is a
homeomorphism and $[T,Z]_{\mathcal{I}}\cong Z^{n}$.
\end{proof}

Let $\two$ be equipped with the discrete topology $\tau_\two=\{\emptyset, \{0\}, \{1\}, \two \}$. For any $T_{0}$-space $Z$,
denote by $(\two, Z)$ the set of all continuous functions $\two\to Z$;
since $\two$ is discrete, every function from $\two$ to $Z$ is continuous. Let \([\two, Z]_{\mathcal{I}}\) denote the set \((\two, Z)\) endowed with the Isbell topology, where the sets
$N(H \leftarrow V)=\{f \in (\two, Z): f^{-1}(V) \in H\}$ with
$H\in \sigma(\tau_\two)$ and $V\in\mathcal{O}(Z)$ form a subbasis for Isbell topology.

The following corollary follows from Theorem \ref{prop:finite-discrete-isbell}.

\begin{corollary}\label{thm:func-homeo}
For every $T_{0}$-space $Z$, the  map
$h:[\two,Z]_{\mathcal{I}}\longrightarrow Z\times Z$ defined by $h(f)=(f(0),f(1))$
is a homeomorphism.
\end{corollary}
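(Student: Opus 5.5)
This is the case $n=2$ of Theorem~\ref{prop:finite-discrete-isbell}, so the plan is simply to specialise that theorem to $T=\two$ with $t_1=0$ and $t_2=1$. The space $\two$ carries the discrete topology $\tau_\two$, so it is a finite discrete space with two points. The evaluation map $h(f)=(f(0),f(1))$ is exactly the map $f\mapsto(f(t_1),f(t_2))$ of that theorem. Hence $h$ is a homeomorphism onto $Z\times Z=Z^2$ with the product topology.

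As a sanity check I would also make the identification explicit, since the subbasis is described in terms of $\sigma(\tau_\two)$. The lattice $\tau_\two=\mathcal P(\two)$ is the four-element Boolean lattice, so its Scott open sets are exactly its upper sets.

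\emph{Subbasic sets.} For the principal upper sets we get
\[
N(\mathord{\uparrow}\{0\}\leftarrow V)=\{f:f(0)\in V\}=h^{-1}(V\times Z),\qquad N(\mathord{\uparrow}\{1\}\leftarrow V)=h^{-1}(Z\times V).
\]
For the top element, $N(\mathord{\uparrow}\two\leftarrow V)=h^{-1}(V\times V)$. The remaining upper sets are unions of principal ones, and the trivial cases $H=\emptyset$ and $H=\tau_\two$ give $\emptyset$ and the whole space.

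\emph{Conclusion.} Every Isbell subbasic set is therefore a finite union of finite intersections of the sets $h^{-1}(\pi_i^{-1}(V))$. Conversely, each of these sets is itself an Isbell subbasic set. So $h$ is a continuous, open bijection, i.e.\ a homeomorphism.

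No step is a genuine obstacle. The only point needing care is that every function $\two\to Z$ is continuous, which makes $h$ bijective. This holds because $\two$ is discrete, as was already noted when the notation $(\two,Z)$ was introduced.
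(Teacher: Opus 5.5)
Your proposal is correct and matches the paper, which obtains this corollary directly as the case $n=2$ of Theorem~\ref{prop:finite-discrete-isbell}. Your explicit check of the subbasic sets $N(H\leftarrow V)$ over the four-element lattice $\tau_\two$ just repeats that theorem's proof in this special case.
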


\begin{theorem}\label{thm:func-counter}
Let $X=A\sqcup B$. Then the function space $[\two,X]_{\mathcal{I}}$ is not a strong $d$-space, in particular, $[\two,X]_{\mathcal{I}}$ is neither a strong $R$-space nor strongly well-filtered.
\end{theorem}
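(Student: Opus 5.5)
The plan is to reduce the statement to Proposition~\ref{prop:Xp-not-sd} via the homeomorphism of Corollary~\ref{thm:func-homeo}. By Corollary~\ref{thm:func-homeo} with $Z=X$, the evaluation map $h:[\two,X]_{\mathcal{I}}\to X\times X$, $h(f)=(f(0),f(1))$, is a homeomorphism. Being a strong $d$-space is a topological property: Definition~\ref{df1} is phrased entirely through the specialization order, saturations $\mathord{\uparrow}y$, directed sets and open sets. A homeomorphism preserves and reflects the specialization order, so it carries principal upper sets to principal upper sets and directed sets to directed sets. It also commutes with intersections and preserves open sets. Hence $[\two,X]_{\mathcal{I}}$ is a strong $d$-space if and only if $X\times X$ is one.

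By Proposition~\ref{prop:Xp-not-sd}, $X\times X$ is not a strong $d$-space. Recall the reason: $A\times B$ is a clopen, in particular closed, subspace of $X\times X$, since $A$ and $B$ are clopen in $X$. By Remark~\ref{re2}, $A\times B$ is not a strong $d$-space, and Proposition~\ref{lem:sd-closed} then rules out $X\times X$. Therefore $[\two,X]_{\mathcal{I}}$ is not a strong $d$-space.

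For the ``in particular'' clause, I would invoke the Remark after Definition~\ref{def:Rspace}: every strong $R$-space is strongly well-filtered, and every strongly well-filtered space is a strong $d$-space. Taking contrapositives, $[\two,X]_{\mathcal{I}}$ is neither strongly well-filtered nor a strong $R$-space.

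No step is a real obstacle, since the substantive work is already in Remark~\ref{re2} and Theorem~\ref{prop:finite-discrete-isbell}. The only point needing a sentence of justification is the invariance of the strong $d$-space condition under homeomorphisms. To make the argument concrete, I would transport a witness from $X\times X$ to the function space. Take the witness for $A\times B$ from Remark~\ref{re2}: a directed set $D$, a point $x$ and an open set $U$. View them in $X\times X$, then pull them back by $h^{-1}$ to $h^{-1}(D)$, $h^{-1}(x)$ and $h^{-1}(U)$ in $[\two,X]_{\mathcal{I}}$.
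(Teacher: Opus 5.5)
Your proposal is correct and matches the paper's proof: transfer the problem through the homeomorphism $[\two,X]_{\mathcal{I}}\cong X\times X$ of Corollary~\ref{thm:func-homeo}, apply Proposition~\ref{prop:Xp-not-sd}, and obtain the ``in particular'' clause from the implications strong $R$-space $\Rightarrow$ strongly well-filtered $\Rightarrow$ strong $d$-space. You also state explicitly that $A\times B$ is clopen, hence closed, in $X\times X$, which supplies a step the paper's proof of Proposition~\ref{prop:Xp-not-sd} leaves implicit.
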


\begin{proof}
 Let $Z=X$. Then $[\two,X]_{\mathcal{I}}\cong X\times X$ by Corollary~\ref{thm:func-homeo}, while $X\times X$ is not a strong $d$-space by Proposition~\ref{prop:Xp-not-sd} .
\end{proof}

\begin{corollary}
None of $[\two,X]_{\mathcal{P}}$, $[\two,X]_{\mathcal{K}}$, $[\two,X]_{\mathcal{C}}$ and $[\two,X]_{\mathcal{T}_*}$ is  a strong $d$-space.
\end{corollary}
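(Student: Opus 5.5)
The idea is to reduce every topology in the list to the Isbell case, Theorem~\ref{thm:func-counter}. The domain $\two$ is discrete, hence trivially an $\alpha^*$-space. Indeed, for an open $U$ and $x\in U$, take $x_0=x$: since $\uparrow x=\{x\}$ is open, $x\in\operatorname{int}(\uparrow x_0)$. Proposition~\ref{proer}(2) then gives $\mathcal{P}=\mathcal{K}=\mathcal{C}=\mathcal{I}=\mathcal{T}_*$ on $(\two,X)$.

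With this equality, each of $[\two,X]_{\mathcal{P}}$, $[\two,X]_{\mathcal{K}}$, $[\two,X]_{\mathcal{C}}$ and $[\two,X]_{\mathcal{T}_*}$ is literally the same topological space as $[\two,X]_{\mathcal{I}}$. By Corollary~\ref{thm:func-homeo}, that space is homeomorphic to $X\times X$. By Theorem~\ref{thm:func-counter} (equivalently, Proposition~\ref{prop:Xp-not-sd}), it is not a strong $d$-space. Being a strong $d$-space is a topological property, since it is phrased through the specialization order and open sets, so none of the four spaces is a strong $d$-space.

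The only point needing care is the $\alpha^*$ verification and the applicability of Proposition~\ref{proer}(2) to a discrete domain; both are immediate. If one prefers to avoid the $\alpha^*$ machinery for $\mathcal{P}$, there is an independent check. The proof of Theorem~\ref{prop:finite-discrete-isbell} shows that the Isbell topology on $(T,Z)$ is generated by the sets $N_x(V)$, which is exactly the subbasis of $\mathcal{P}$. Hence $\mathcal{P}=\mathcal{I}$ directly, and $\mathcal{K}$ is squeezed in between by Proposition~\ref{proer}(1). For $\mathcal{C}$ and $\mathcal{T}_*$, however, I would rely on Proposition~\ref{proer}(2).
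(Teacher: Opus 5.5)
Your proof is correct and follows the paper's own argument: $\two$ is an $\alpha^*$-space, so Proposition~\ref{proer}(2) identifies $\mathcal{P}$, $\mathcal{K}$, $\mathcal{C}$ and $\mathcal{T}_*$ with the Isbell topology, and Theorem~\ref{thm:func-counter} then finishes. You also verify the $\alpha^*$ property explicitly and give a direct check that $\mathcal{P}=\mathcal{I}$ (with $\mathcal{K}$ squeezed between them); both are sound additions that the paper leaves implicit.
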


\begin{proof}
Since $\two$ is an \(\alpha^*\)-space, it follows from Proposition ~\ref{proer} that $[\two,X]_{\mathcal{I}}=[\two,X]_{\mathcal{P}}=[\two,X]_{\mathcal{K}} = [\two,X]_{\mathcal{C}} = [\two,X]_{\mathcal{T}_*}$; accordingly none of them is a strong $d$-space.
\end{proof}

\begin{theorem}\label{prop:topologies-coincide}
The Scott topology and
the Isbell topology on  $(\two, X)$ coincide, both equal the product topology on $X\times X$.
\end{theorem}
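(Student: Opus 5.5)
Under the identification $h(f)=(f(0),f(1))$ of Corollary~\ref{thm:func-homeo}, the pointwise order on $(\two,X)$ is the componentwise specialization order on $X\times X$. So the Scott topology on $(\two,X)$ becomes the Scott topology $\sigma(X\times X)$ of the product poset, where $X$ is ordered by $\leq_X$. By Corollary~\ref{thm:func-homeo}, the Isbell topology is already the product topology. It therefore suffices to prove that
\[
\sigma(X\times X)=\sigma(X)\times\sigma(X).
\]
Here we use the fact, recorded in the paper, that the topology of $X=A\sqcup B$ is the Scott topology of its specialization order, namely the disjoint union of the posets $\mathbb{N}_\infty^+$ and $Q$. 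Thus $X=\Sigma P$ with $P=\mathbb{N}_\infty^+\sqcup Q$.

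We then apply Theorem~\ref{tmscott}. By Proposition~\ref{lem:X-prop}, $\Sigma P=X$ is locally compact, hence core-compact. The implication (1)$\Rightarrow$(2) of Theorem~\ref{tmscott}, with $S=P$, gives $\Sigma(P\times P)=\Sigma P\times\Sigma P$, which is exactly the required equality. Combining this with Corollary~\ref{thm:func-homeo}, the Scott topology on $(\two,X)$ (pointwise order) equals the product topology on $X\times X$ transported by $h$, and this in turn equals the Isbell topology.

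The main obstacle is the identification of the topology of $X$ with $\sigma(P)$. A subset $U\subseteq P$ is Scott open iff it is an upper set and inaccessible by directed sups. Every directed subset of the disjoint union lies in one component, since elements from different components have no common upper bound. Hence $U$ is Scott open iff $U\cap\mathbb{N}_\infty^+$ and $U\cap Q$ are Scott open in their components, which is precisely openness in the sum topology. I would also check that the pointwise order on functions $\two\to X$ is $f\le g$ iff $f(i)\leq_X g(i)$ for $i=0,1$, so that $h$ is an order isomorphism onto $P\times P$ and transports Scott topologies correctly.

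If one prefers to avoid Theorem~\ref{tmscott}, a direct check is also available. The inclusion $\sigma(X)\times\sigma(X)\subseteq\sigma(X\times X)$ is always true, because products of Scott open sets are Scott open. For the reverse inclusion, take $W$ Scott open in $P\times P$ and $(p,q)\in W$. Each point of $P$ has a compact saturated neighbourhood basis of the form ${\uparrow}F$ with $F$ finite; in $\mathbb{N}_\infty^+$ the point $\infty$ is approximated by the sets $T_n$. We then find $p'\ll p$ and $q'\ll q$ with $(p',q')\in W$, using that $W$ is inaccessible by the directed sets of way-below approximants. This gives an open box $\operatorname{int}{\uparrow}p'\times\operatorname{int}{\uparrow}q'\subseteq W$ containing $(p,q)$. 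Since $P$ is a continuous dcpo, this argument works and is the routine content of Theorem~\ref{tmscott}.
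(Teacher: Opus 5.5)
Your proposal is correct and follows the paper's own argument. Under $h$ the pointwise order becomes the product order, so the claim reduces to $\sigma(X\times X)=\sigma(X)\times\sigma(X)$; this follows from Theorem~\ref{tmscott} because $X=\Sigma(\mathbb{N}_\infty^+\sqcup Q)$ is locally compact and hence core-compact, while the Isbell topology is the product topology by Theorem~\ref{prop:finite-discrete-isbell}. Your check that the sum topology is the Scott topology of the disjoint-union poset, and your optional direct continuous-dcpo argument, only make explicit details the paper takes for granted.
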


\begin{proof}
We equip $(\two,X)$ with the pointwise order:
\[
f\leq g
\quad\Longleftrightarrow\quad
f(t)\leq_X g(t)\text{ for every }t\in\two.
\]
We denote by $[\two,X]_{\sigma}$ the set $(\two,X)$
equipped with the Scott topology of this order.	
	
Since $\two$ is discrete, $(\two,X)=X^{\two}=X\times X$ as a set. Write
$\sigma(X)$ for the topology of $X$, which is the Scott topology of the dcpo
$\mathbb{N}_\infty^{+}\sqcup Q$.  Since \(X\) actually carries the Scott topology and \(X\) is locally compact, we have \(\sigma(X \times X)=\sigma(X) \times \sigma(X)\) by Theorem~\ref{tmscott}.
The Isbell topology on $(\two,X)$ is the product topology
$X\times X$ by Theorem~\ref{prop:finite-discrete-isbell}.
Consequently, $[\two,X]_{\mathcal{I}}=[\two,X]_{\sigma}$, and both are equal to the product topology on $X\times X$.
\end{proof}

\begin{corollary}
$[\two,X]_{\sigma}$ is not a strong $d$-space.
\end{corollary}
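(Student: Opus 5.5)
The plan is to obtain the corollary directly from Theorem~\ref{prop:topologies-coincide} and Theorem~\ref{thm:func-counter}, with no new construction. Theorem~\ref{prop:topologies-coincide} says that the Scott topology of the pointwise order on $(\two,X)$ equals the Isbell topology. Hence $[\two,X]_{\sigma}$ and $[\two,X]_{\mathcal{I}}$ are the same topological space. Theorem~\ref{thm:func-counter} says $[\two,X]_{\mathcal{I}}$ is not a strong $d$-space, so $[\two,X]_{\sigma}$ is not one either. Being a strong $d$-space is defined purely in terms of the topology (via its specialization order and open sets), so it is invariant under equality of topologies.

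For a self-contained argument, I would instead identify $[\two,X]_{\sigma}$ with $X\times X$ and exhibit the failure there. The map $f\mapsto (f(0),f(1))$ is an order isomorphism from $(\two,X)$ with the pointwise order onto $X\times X$ with the product of the specialization orders. It therefore carries the Scott topology of the pointwise order to $\sigma(X\times X)$. Since $X$ is a Scott space that is locally compact, hence core-compact, Theorem~\ref{tmscott} gives $\sigma(X\times X)=\sigma(X)\times\sigma(X)$, which is the product topology. So $[\two,X]_{\sigma}\cong X\times X$, and Proposition~\ref{prop:Xp-not-sd} finishes the proof.

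The main obstacle is the use of Theorem~\ref{tmscott}. One must check that $X=A\sqcup B$ really carries the Scott topology of the dcpo $\mathbb{N}_\infty^{+}\sqcup Q$, so that $X=\Sigma P$ for a poset $P$. One must also check that local compactness yields core-compactness of $\Sigma P$. Both facts are recorded earlier: the remark on the sum $X$, and Proposition~\ref{lem:X-prop}. Granting them, the argument is purely formal.

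As a further consequence, $[\two,X]_{\sigma}$ is neither strongly well-filtered nor a strong $R$-space. This follows from the Remark that strong $R$-spaces are strongly well-filtered and strongly well-filtered spaces are strong $d$-spaces.
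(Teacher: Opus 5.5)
Your proposal is correct and follows the paper's route. The corollary comes from combining Theorem~\ref{prop:topologies-coincide} (the Scott topology of the pointwise order on $(\two,X)$ equals the Isbell topology, both being the product topology on $X\times X$ via Theorem~\ref{tmscott}) with Theorem~\ref{thm:func-counter}, and your ``self-contained'' variant just unpacks the paper's proof of Theorem~\ref{prop:topologies-coincide} together with Proposition~\ref{prop:Xp-not-sd}, using the same facts: $X=\Sigma(\mathbb{N}_\infty^{+}\sqcup Q)$, and local compactness implies core-compactness.
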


\section{Conclusions}

With the aid of $X=A\sqcup B$, we arrive at the following conclusions:
\begin{itemize}
\item Neither the Smyth power space $\PS(X)$ nor Scott power space $P_\sigma(X)$ is a  strong $d$-space, thereby providing negative answers to Questions 7.3 and 7.7 of \cite{Xu2026}, Questions 7.1 and 7.2 of~\cite{WenBaoXu2026};
\item For some $T_{0}$-space $\two$, even if \(X\) is a strong \(R\)-space, the function space \([\two,X]_{\mathcal{T}}\) is not a strong \(d\)-space, where $\mathcal{T}=\mathcal{P}, \mathcal{K}, \mathcal{I}$, $\mathcal{C}$ or $\sigma$.
\end{itemize}

\section*{Acknowledgments}

 This work is supported by the Natural Science Foundation of Hunan Province (No. 2026JJ81209), National Natural Science Foundation of China--Mathematics Tianyuan Fund (No. 2111300034) and Science and Technology Project of Hebei Education Department (No. 25113159).


\begin{thebibliography}{99}

\bibitem{En}
R. Engelking, General Topology, Revised and completed edition, Sigma Series in Pure Mathematics 6, Heldermann Verlag, Berlin, 1989.

\bibitem{Ershov2020}
Y. Ershov, M. Schwidefsky, On function spaces, Siberian Electronic Mathematical Reports, 17: 999--1008, 2020.

\bibitem{Ershov2022}
Y.~Ershov, M.~Schwidefsky, On function spaces. II,
Siberian Electronic Mathematical Reports 19(2) (2022) 815--834.

\bibitem{Ershov2024}
Y. Ershov, M. Schwidefsky, On function spaces. III, Lobachevskii Journal of Mathematics, 45(4): 1493--1498, 2024.

\bibitem{Gierz}
G.~Gierz, K.~H. Hofmann, K.~Keimel, J.~D. Lawson, M.~W. Mislove, D.~S. Scott,
\emph{Continuous Lattices and Domains}, Encyclopedia of Mathematics and its
Applications, vol.~93, Cambridge University Press, Cambridge, 2003.

\bibitem{Goubault}
J.~Goubault-Larrecq, \emph{Non-Hausdorff Topology and Domain Theory}, New
Mathematical Monographs, vol.~22, Cambridge University Press, Cambridge, 2013.

\bibitem{HeckmannKeimel}
R.~Heckmann, K.~Keimel, Quasicontinuous domains and the Smyth powerdomain,
Electron.~Notes Theor.~Comput.~Sci.~298 (2013) 215--232.

\bibitem{Isbell}
J.~R. Isbell, Function spaces and adjoints, Math.~Scand.~36 (1975) 317--339.

\bibitem{LawsonXu2024}
J.~Lawson, X.~Xu, $T_0$-spaces and the lower topology, Math.~Struct.~Comput.
Sci.~34 (2024) 467--490.

\bibitem{Liu2021}
B. Liu, Q. Li, W.K. Ho, On function spaces related to d-spaces, Topology and its Applications, 300, 107757, 2021.

\bibitem{WenBaoXu2026}
X.~Wen, M.~Bao, X.~Xu, On strong $R$-spaces, arXiv preprint
arXiv:2607.20294 (2026).

\bibitem{XiLawson2017}
X.~Xi, J.~Lawson, On well-filtered spaces and ordered sets,
Topology and its Applications 228 (2017) 139--144.

\bibitem{Xu2016}
X.~Xu, \emph{Order and Topology}, Science Press, Beijing, 2016.

\bibitem{Xu2026}
X.~Xu, Strongly well-filtered spaces and strong $d$-spaces, Topology and its
Applications 377 (2026) 109654.

\bibitem{xubao2022}
X.~Xu, M.~Bao, X.~Zhang, On function spaces equipped with Isbell topology
and Scott topology, Mathematical Structures in Computer Science
32(8) (2022) 1099--1116.

\bibitem{XuWenXi2023}
X.~Xu, X.~Wen, X.~Xi, Scott topology on Smyth power posets,
Mathematical Structures in Computer Science 33(9) (2023) 832--867.

\bibitem{XuYangChen2026}
X.~Xu, Y.~Yang, L.~Chen, Strong well-filteredness of upper topology on
sup-complete posets, Math.~Struct.~Comput.~Sci.~36 (2026) e19, 1--11.

\bibitem{XuYang2021}
X.~Xu, Z.~Yang, Coincidence of the upper Vietoris topology and the Scott
topology, Topology and its Applications 288 (2021) 107480.

\bibitem{XuZhao2020}
X.~Xu, D.~Zhao, On topological Rudin's lemma, well-filtered spaces and sober
spaces, Topology and its Applications 272 (2020) 107080.

\bibitem{XuZhao2021}
X.~Xu, D.~Zhao, Some open problems on well-filtered spaces and sober spaces,
Topology and its Applications 301 (2021) 107540.
\end{thebibliography}
\end{document}